\newlength{\dk} \dk=1.7em
\newlength{\dktmp} \dktmp=0.7\dk
\tikzset{x=1.0\dk,y=1.0\dk,
 cell/.style={circle,draw=black, thin, minimum size=0.77\dk},
 }
\def\cl#1#2{ ++(1.2,0) node [cell,fill=#1] {\raisebox{-0.25em}[0.2em][0em]{\makebox[0mm][c]{#2}}} }
\titlespacing{\section}{0pt}{2.5ex}{1.5ex}
\titlespacing{\subsection}{0pt}{1.5ex}{1ex}
\titlespacing{\subsubsection}{0pt}{1.5ex}{1ex}
\titleformat{\section}{\large\bfseries\centering}{\thesection}{1em}{}
\titleformat{\subsection}[runin]{\bfseries}{\thesubsection.}{0.5em}{}[.\mbox{\ }]
\titleformat{\subsubsection}[runin]{\bfseries}{\thesubsubsection.}{0.4em}{}[.\mbox{\ }]
\newtheorem{construction}{Construction}
\newcommand\orcidicon[1]{\href{https://orcid.org/#1}{\includegraphics[scale=0.02]{orcid.pdf}}}
\newtheorem{lemma}{Lemma}
\newtheorem{remark}{Remark}
\newtheorem{theorem}{Theorem}
\newtheorem{corollary}{Corollary}
\newtheorem{proposition}{Proposition}
\begin{document}
\renewcommand{\refname}{References}
\renewcommand{\proofname}{Proof.}
\renewcommand{\figurename}{Fig.}

\thispagestyle{empty}

\title[On null CRCs in grid]{\large On null completely regular codes in Manhattan metric}

\author[I.YU. MOGILNYKH]{{\bf I.Yu. Mogilnykh}}
\author[A.YU. VASIL'EVA]{{\bf A.Yu. Vasil'eva}}

\address{Ivan Yurevich Mogilnykh  
\newline\hphantom{iii} Sobolev Institute of Mathematics,
\newline\hphantom{iii} pr. Koptyuga, 4,
\newline\hphantom{iii} 630090, Novosibirsk, Russia}%
\email{\textcolor{blue}{ivmog84@gmail.com}}%

\address{Anastasia Yurievna Vasil'eva  
\newline\hphantom{iii} Sobolev Institute of Mathematics,
\newline\hphantom{iii} pr. Koptyuga, 4,
\newline\hphantom{iii} 630090, Novosibirsk, Russia}%
\email{\textcolor{blue}{vasilan@math.nsc.ru}}%

\thanks{\rm The work of the both authors was performed according to the Government
research assignment for IM SB RAS, Project No. FWNF-2022-0017.}

\vspace{1cm}
\maketitle {\small

\vspace{-12pt}

\bigskip
\bigskip

\begin{quote}
\noindent{\bf Abstract: We investigate the class of completely regular codes in graphs with a distance partition $C_0, \ldots, C_{\rho}$, where each set $C_i$, for $0 \leq i \leq r - 1$, is an independent set. This work focuses on the existence problem for such codes in the $n$-dimensional infinite grid. We demonstrate that several parameter families of such codes necessarily arise from binary or ternary Hamming graphs or  do not exist. Furthermore, employing binary linear programming tech\-niques, we explore completely regular codes in infinite grids of dimensions $3$ and $4$ for the cases $r = 1$ and $r = 2$.}    
 \medskip

 \end{quote}
}

\bigskip

\section{Introduction}

One-error-correcting perfect codes \cite{GW70} and diameter perfect codes \cite{Et11} in the $n$-dimensional infinite  grid graph are well-studied objects; both can be defined as completely regular codes with covering radii $1$ and $2$.
Later variations and generalizations can be viewed somewhat separately and include perfect colorings with multiple colors and completely regular codes (the latter is referred to as CRC) with various covering radii. 
Perfect $k$-colorings in the infinite  grid graph $G_n$ were studied by Axenovich for $k = n = 2$ \cite{Ax03}, by Puzynina for $k = 3$, $n = 2$ \cite{Puz05} and by Dobrec et al.\ for $k = 2$ and arbitrary $n \geq 2$ \cite{DGHM09}. For $k \leq 9$ and $n = 2$, they were studied by Krotov. Completely regular codes in $G_n$ were considered in \cite{Tarasov} and \cite{AvgVas22}.

As the infinite grid graph provides a graph covering for the binary even-length Hamming and modular grid graphs, the characterization problem for CRCs with putative parameters in the infinite rectangular grid graph $G_n$ is at least as hard as the respective long-standing open problems \cite{BRZ19}, \cite{KKM25} for CRCs in classical binary and ternary Hamming graphs.

We study completely regular codes whose first $r$ diagonal elements of parameter matrix are zeros; we refer to such codes as $r$-null CRCs. Several important classes of error-correcting codes in Hamming and grid graphs such as perfect codes, diameter perfect codes, Golay and Preparata codes and their derived codes are $r$-null completely regular with maximal or premaximal possible $r$ \cite{BRZ19}. In a similar fashion, one can define all-ones CRCs.

The next three sections are introductory: basic definitions are provided in Section 2; constructions of completely regular codes in the $n$-dimensional grid graph are reviewed in Section 3 and the structure of the first four layers of the graph is discussed in Section 4.
In Section 5, for arbitrary $n$ and any covering radius, we show that any $2$-null CRC in $G_n$ with $c_1 = 1$ and $c_2 = 2$ necessarily arises from the binary Hamming graph and that $2$-null CRCs with $c_1 = 1$ or $2$ and $c_3 = 3$ exist only in the case $c_1 = 1$, $c_2 = 3$ and $n = 2$.
In Section 6, we prove that $1$-null completely regular codes with $c_2 = 1$ necessarily have period three.
Codes with the all-ones diagonal in the parameter matrix are considered in Section 7, where several subclasses of these codes are also shown either to be derived from the binary Hamming graph or not to exist.

In Section 8, we propose a binary linear programming approach to the existence problem for completely regular codes in the $n$-dimensional infinite grid for $n = 3$, $4$. We characterize all $1$-null CRCs in $G_3$ up to their parameters and rule out several $2$-null CRCs in $G_4$. A distinctive feature of our approach is a relaxed form of the linear programming problem: the infeasibility of a single instance of linear programming  problem implies the nonexistence of multiple codes with different covering radii.

\section{Definitions and notations}
A vertex partition $(C_0, C_1, \ldots, C_{k-1})$ of a graph is called a perfect $k$-coloring (or equitable $k$-partition) if for every $i, j \in {0, 1, \ldots, k-1}$ there exists an integer $\alpha_{ij}$ such that every vertex in $C_i$ has exactly $\alpha_{ij}$ neighbors in $C_j$.
The matrix $A = (\alpha_{ij})$ is called the parameter matrix of the perfect coloring.
In the case when the parameter matrix is tridiagonal, the set $C_0$ is called a completely regular code.
The maximum distance between the code and the remaining vertices is called the covering radius of the code and is denoted by $\rho$.
We will use both the classical representation of the parameter matrix and the following compact notation as the sequence of its tridiagonal elements:
$$[a_0,b_0 | c_1,a_1,b_1 | c_2,a_2,b_2 | \ldots | c_{\rho},a_{\rho}].$$  
Given a code, the set of vertices at distance equal to the covering radius from the code is called the opposite code.
Obviously, if a code is a CRC, then its opposite code is also a CRC and vice versa.

A CRC with parameter matrix $A$ is called an $r$-null CRC if $a_0 = \ldots = a_{r-1} = 0$.
A CRC is called an all-nulls (respectively, all-ones) CRC if $a_0 = \ldots = a_{\rho} = 0$ (respectively, $a_0 = \ldots = a_{\rho} = 1$).

Note that, by definition, $1$-null CRCs are CRCs with minimum distance at least two.
Moreover, $2$-null CRCs in Hamming graphs include all CRCs with minimum distance at least four and, in particular, many well-known families of optimal codes, such as Preparata, BCH, Golay codes, perfect codes and shortened perfect codes.
All these codes give rise to CRCs in the $n$-dimensional grid (see Section 3 below).

The $n$-dimensional infinite rectangular grid is the graph $G_n$ with vertex set ${\bf Z}^n$, where ${\bf Z}$ denotes the set of integers and edge set $$\{ (x,y) : \sum^n_{i=1}|x_i-y_i| = 1\}.$$
The graph metric in  $G_n$ is often called Manhattan metric \cite{Et11}. 

For simplicity, we use non-comma notation for elements in ${\bf Z}^n$, for example $(101\text{-}21)$ designates an element of ${\bf Z}^5$.  

By $G_{n,q}$ we denote the graph with the vertex set $\{0,\ldots,q-1\}^n$ and 
the edge set $$\{ (x,y) : \sum^n_{i=1}min\{|x_i-y_i|,|q-x_i+y_i| \}= 1\}.$$  The vertices of the Hamming graph $H(n,q)$ are $\{0,\ldots,q-1\}^n$, the edge set is $$\{ (x,y) : |\{i\in \{1,\ldots,n\}:x_i\neq y_i\}| = 1\}.$$ We use a word to  refer to a vertex of these graphs.  
The all-zero word in denoted by ${\bf 0}$. 
The weight of the word $x$ is its  distance between $x$ and ${\bf 0}$ in $G_n$.  
Throughout the paper we denote by $e_i$ the word of weight one with $i$th  symbol equal to $1$. The vertex set of infinite triangular grid is  ${\bf Z}^2$ and the edge set is 
$$\{ (x,y) : \sum^2_{i=1}|x_i-y_i| = 1\}\cup \{(x,y):x_1-y_1=x_2-y_2\in \{-1,1\}\}.$$
Both $G_n$ and the infinite triangular grid are Cayley graphs: $G_n$ is the Cayley graph of the $n$th power of the additive group of integers and the infinite triangular grid is the Cayley graph of its second power.
The respective Cayley graph generator sets are 
$\{\pm e_i: i \in \{1,\ldots,n\}\}$ and $\{\pm (10), \pm (01), \pm (11)\}$.

 The following restriction on the parameters of completely regular codes in $G_n$  was established in \cite{AvgVas22}. 
\begin{theorem}\cite[Theorem 1]{AvgVas22}\label{T_AV_1} The following holds for a CRC in $G_n$ with parameter matrix $[a_0,b_0 | c_1,a_1,b_1 | c_2,a_2,b_2 | \ldots | c_{\rho},a_{\rho}]$ and covering radius $\rho$:
$$c_1\leq c_2\leq \ldots \leq c_{\rho},$$ 
$$b_0\geq b_1\geq \ldots \geq b_{\rho-1}.$$
\end{theorem}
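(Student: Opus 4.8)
The plan is to derive both chains of inequalities from a single local observation: moving one step along an edge \emph{toward} the code can only enlarge the set of ``ascending'' directions at a vertex, and dually, moving one step \emph{away} from the code can only enlarge the set of ``descending'' directions. Throughout, write $r(x)=d(x,C_0)$, so that $C_i=r^{-1}(i)$, and recall that $r$ is $1$-Lipschitz on $G_n$: adjacent vertices satisfy $|r(x)-r(y)|\le 1$. For $x\in C_i$ call a generator $d\in\{\pm e_1,\dots,\pm e_n\}$ \emph{descending}, \emph{level}, or \emph{ascending} according as $x+d$ lies in $C_{i-1}$, $C_i$, or $C_{i+1}$; by complete regularity exactly $c_i$, $a_i$, $b_i$ of the $2n$ generators fall into these three classes.

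First I would prove the $b$-chain. Fix $x\in C_i$ with $i\ge 1$. Since every vertex at positive distance from a set has a strictly closer neighbor, $x$ has a descending neighbor $y=x-\epsilon e_m\in C_{i-1}$ for some generator $\epsilon e_m$. I claim that every ascending direction $d=\delta e_j$ at $x$ is ascending at $y$. If $j\ne m$, then $y+d=(x+d)-\epsilon e_m$ is adjacent both to $x+d\in C_{i+1}$, forcing $r(y+d)\ge i$, and to $y\in C_{i-1}$, forcing $r(y+d)\le i$; hence $y+d\in C_i$. If $j=m$, then $x+d\in C_{i+1}$ rules out $d=-\epsilon e_m$ (which would give $x+d=y\in C_{i-1}$), so $d=\epsilon e_m$ and $y+d=x\in C_i$. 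In either case $y+d\in C_i$, so the ascending directions at $x$ form a subset of the ascending directions at $y$, whence $b_i\le b_{i-1}$. Letting $i$ range over $1,\dots,\rho$ (and using $b_\rho=0$) yields $b_0\ge b_1\ge\cdots\ge b_{\rho-1}$.

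The $c$-chain is the mirror image. For $x\in C_i$ with $i\le\rho-1$ one has $b_i\ge 1$, since every vertex of the nonempty set $C_{i+1}$ has a neighbor in $C_i$ and hence there is a $C_i$--$C_{i+1}$ edge; thus $x$ has an ascending neighbor $y=x+\epsilon e_m\in C_{i+1}$. Running the same case analysis on a descending direction $d=\delta e_j$ at $x$---the case $j\ne m$ again pinning $r(y+d)=i$ by the two Lipschitz bounds, and the case $j=m$ forcing $d=-\epsilon e_m$ and $y+d=x\in C_i$---shows that every descending direction at $x$ is descending at $y$, so $c_i\le c_{i+1}$, and therefore $c_1\le c_2\le\cdots\le c_\rho$.

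The whole conceptual content sits in this local inheritance lemma; once it is in place the theorem is immediate, so the argument needs no periodicity, finiteness, or spectral input. The hard part will be the degenerate case $j=m$, where the transported generator collapses onto the very edge $\{x,y\}$ used to define the step: there one must verify that this edge is genuinely ascending (respectively descending) at the new base vertex, so that no direction is lost and the inclusion of direction-sets is honest. The remaining ingredients---that $r$ is $1$-Lipschitz, that a vertex in $C_i$ with $i\ge 1$ always has a descending neighbor, and that $b_i\ge 1$ for $i<\rho$---are standard and require only that $(C_0,\dots,C_\rho)$ is the distance partition of a code with covering radius $\rho$.
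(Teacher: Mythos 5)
Your proof is correct, but note that there is nothing in this paper to compare it against: the statement is imported as \cite[Theorem 1]{AvgVas22} and the paper reproduces no proof of it, so your argument can only be judged on its own merits. On those merits it holds up. The inheritance lemma is sound in both directions: for the $b$-chain, a vertex $x\in C_i$ ($i\ge 1$) has a descending neighbor $y=x-\epsilon e_m$ because $r=d(\cdot,C_0)$ decreases along a shortest path, and your two cases are exhaustive and correct --- for $j\ne m$ the vertex $y+d$ lies on a $4$-cycle through $x$, $x+d$, $y$, so the two Lipschitz bounds (from $x+d\in C_{i+1}$ above and $y\in C_{i-1}$ below) pin $r(y+d)=i$, while for $j=m$ the hypothesis $x+d\in C_{i+1}$ excludes $d=-\epsilon e_m$, leaving $y+d=x\in C_i$; since distinct generators give distinct neighbors in $G_n$, the inclusion of ascending-direction sets gives $b_i\le b_{i-1}$, and the mirrored argument (with the edge-counting observation that $b_i\ge 1$ for $i<\rho$, so an ascending neighbor exists) gives $c_i\le c_{i+1}$. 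Two small points you rely on implicitly but which are standard and harmless: first, that the equitable partition in the paper's definition of a CRC is the distance partition $C_i=r^{-1}(i)$ (this is how the paper itself uses the sets $C_i$, e.g.\ in the abstract); second, that complete regularity is what upgrades ``some vertex of $C_i$ has a neighbor in $C_{i+1}$'' to ``every vertex does.'' It is also worth observing that your argument never uses anything specific to ${\bf Z}^n$ beyond the abelian Cayley structure (commuting generator steps creating the $4$-cycles), so the same proof gives the monotonicity of the $c_i$ and $b_i$ in any Cayley graph of an abelian group with a symmetric generating set, which is more general than the stated theorem.
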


\section{Constructions of CRCs in $G_n$}

Below, we provide constructions of completely regular codes in the $n$-dimensional grid for the current study. The concepts behind these methods are not new. Many of them were presented earlier for CRCs in distance-regular or Cayley graphs and are well-known or can be easily generalized to the $n$-dimensional grid. Several recent surveys on CRCs \cite{BRZ19}, \cite{BKMTV21}, \cite{KKM25} provide a solid background for the constructions described below.

\subsection{ General constructions}Given two graphs $\Gamma$ and $\Gamma'$ with vertex sets $V$ and $V'$, respectively, a surjection $\phi: V \rightarrow V'$ is called a covering (and $\Gamma$ is called a covering graph) if $\phi$ bijectively maps the neighborhood of any vertex in $\Gamma$ to the neighborhood of its image. The role of coverings in the theory of perfect colorings is reflected by the following fact. 
\begin{proposition}\label{T_cover}
 Let $\phi$ be a covering  with  a graph $\Gamma$. If $(C_0,\ldots, C_{k-1})$ is a perfect coloring of $\Gamma'$ with parameter matrix $A$, then 
$(\phi^{-1}(C_0),$ $\ldots,$ $\phi^{-1}(C_{k-1}))$ is a perfect coloring of $\Gamma$ with parameter matrix $A$.
\end{proposition}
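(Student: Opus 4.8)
The plan is to verify directly the two defining requirements of a perfect coloring for the pulled-back partition, using only the surjectivity of $\phi$ for the partition property and the local bijectivity of the covering for the neighbor counts. First I would check that $(\phi^{-1}(C_0), \ldots, \phi^{-1}(C_{k-1}))$ is genuinely a partition of $V$: since $\phi$ is a surjection and $(C_0, \ldots, C_{k-1})$ partitions $V'$, every vertex $v \in V$ has its image $\phi(v)$ lying in exactly one class $C_i$, and hence $v$ lies in exactly one preimage class $\phi^{-1}(C_i)$; the classes are therefore pairwise disjoint and cover $V$.

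The substance of the argument lies in the neighbor counts. Fix a class index $i$ and a vertex $v \in \phi^{-1}(C_i)$, so that $\phi(v) \in C_i$. The defining property of a covering is that the restriction $\phi|_{N(v)} : N(v) \to N(\phi(v))$ is a bijection, where $N(\cdot)$ denotes the neighborhood of a vertex. A neighbor $u$ of $v$ belongs to $\phi^{-1}(C_j)$ precisely when $\phi(u) \in C_j$; since $\phi|_{N(v)}$ is a bijection onto $N(\phi(v))$, the number of such neighbors $u$ of $v$ equals the number of vertices of $N(\phi(v))$ that lie in $C_j$. As $(C_0, \ldots, C_{k-1})$ is a perfect coloring of $\Gamma'$ and $\phi(v) \in C_i$, this latter count is exactly $\alpha_{ij}$, the $(i,j)$ entry of $A$.

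Consequently every vertex of $\phi^{-1}(C_i)$ has precisely $\alpha_{ij}$ neighbors in $\phi^{-1}(C_j)$, independently of the chosen vertex, which is exactly the definition of a perfect coloring with parameter matrix $A$. I expect no genuine obstacle here; the only point that must be handled with care is that the bijectivity of $\phi|_{N(v)}$ is used in both directions, and a mere local surjection would not suffice. Injectivity prevents two distinct neighbors of $v$ from being identified and undercounted, while surjectivity guarantees that every neighbor of $\phi(v)$ in $C_j$ is realized by some neighbor of $v$. It is precisely this two-sided local bijection that transfers the count intact and forces the parameter matrix to be preserved unchanged.
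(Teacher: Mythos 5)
Your proof is correct. The paper in fact states this proposition without any proof, treating it as a standard fact about coverings; your direct verification---pulling back the partition and using the two-sided local bijection $\phi|_{N(v)}\colon N(v)\to N(\phi(v))$ to transfer neighbor counts class by class---is exactly the canonical argument the paper leaves implicit, and your closing observation that injectivity and surjectivity of the local map are both essential (a mere local surjection could undercount or overcount) is the right point of care.
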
 

Let $\phi:G\rightarrow G'$ be a homomorphism from the group $G$ to $G'$, which maps a generator set $S$ of $G$ to $S'$ of $G'$. Then $\phi$ is a covering of the Cayley graph of $G'$ with the generator set $S'$  with a covering graph being the Cayley graph of $G$ with the generator set $S$  \cite[Lemma 4.1]{BKMTV21}.

The component-wise modulo $q$ mapping is a homomorphism that implies that the graph $G_n$ is a covering graph for $G_{n,q}$, $q\geq 3$ and the Hamming graphs $H(2n,2)$, $H(n,3)$ in particular. Moreover, $G_3$ covers an infinite trian\-gular grid. Due to Proposition \ref{T_cover}, the CRCs in these graphs are sources for CRCs in the $n$-dimensional  grid. 

We say that a code $C$ has a period $q$ in $i$th direction if $x\pm qe_i\in C$ for $x\in C$. 

\begin{construction}\label{Constr_Gnq} Let 
$C$ be a CRC in the  graph $G_{n,q}$, $q\geq 3$ then the code $\{x\in {\bf Z}^n: (x_1 \mbox{ mod } q,\ldots, x_n \mbox{ mod } q) \in C\}$ is a CRC in $G_n$ with the same parameter matrix as $C$. 
\end{construction} 
\begin{remark}
One can show that a CRC $C$ in $G_n$ is obtained from Construction \ref{Constr_Gnq} by showing that $q$ is a period of $C$ in any direction i.e. 
$x\pm qe_i\in C$,  for all $i \in \{1,\ldots,n\}$ and $x$ in $C$. 
For any fixed $n\geq 2$, there are many  perfect (diameter perfect) codes in $G_n$ with  different periods  \cite{Et11}. In Sections 5-7  we show that some classes of null-CRCs have only  specific period $q$, e.g. $q=3$ or $4$ in each direction.

\end{remark}
We specially distinguish  two  cases of the above construction.

\begin{construction}\label{Constr_Hn3} Let 
$C$ be a CRC in the  Hamming graph $H(n,3)=G_{n,3}$, then the code $\{x\in {\bf Z}^n: (x_1 \mbox{ mod } 3,\ldots, x_n \mbox{ mod } 3) \in C\}$ is completely regular in $G_n$ with the same parameter matrix as $C$.  
\end{construction}

As the graph  $G_{n,4}$ is isomorphic to  $H(2n,2)$ via the Gray mapping $\tau(0)=(00)$, $\tau(1)=(10)$, $\tau(2)=(11)$, $\tau(3)=(01)$ we obtain the following. 

\begin{construction}\label{Constr_H2n2} If $C$ is a CRC in the Hamming graph $H(2n,2)$.  then the code $\{x\in {\bf Z}^n: (\tau(x_1 \mbox{ mod } 4),\ldots, \tau(x_n \mbox{ mod } 4)) \in C\}$ is a CRC in $G_n$ with the same parameter matrix as $C$.
\end{construction} 
Consider the following mapping: $$\phi(x_1,x_2,x_3)=(x_1-x_2,x_3-x_2)$$ for $(x_1,x_2,x_3)\in {\bf Z}^3$. This function is a group homomorphism from the additive group of ${\bf Z}^3$ to that of ${\bf Z}^2$. Moreover, it maps the Cayley generators of $G_3$ to those of the infinite triangular grid:
$\phi(e_1)=(1,0)$, $\phi(e_3)= (0,1)$, $\phi(e_2)=-(1,1)$.
Therefore, $\phi$ is a covering of the infinite triangular grid with covering graph $G_3$ and from Proposition 
\ref{T_cover} we obtain the following.

\begin{construction}\label{Constr_Triangular} If $C$ is a  CRC  in the infinite triangular grid  then 
$$\{(x_1,x_2,x_3)\in {\bf Z}^3: (x_1-x_2,x_3-x_2) \in C\}$$ is a CRC in $G_3$ with the same parameter matrix as $C$. 
\end{construction}





The construction  below was firstly  described by Fon-der-Flaas in \cite{FDF07} for Hamming graphs. It  can  be  extended to  Cayley graphs of abelian groups such as infinite grid.

\begin{construction}\label{Constr_ZMult} Let $C$ be a CRC in the graph $G_n$ (or $H(n,q)$). For integer $k\geq 1$, $x\in {\bf Z}^{kn}$  we define  $\phi(x)=(\sum\limits_{i=1,\ldots,k} x_i, \ldots,\sum\limits_{i=1,\ldots,k} x_{(n-1)k+i}).$ The code $\{x: \phi(x)\in C\}$  is a CRC in $G_{kn}$ (or $H(kn,q)$) with parameter matrix $kA$.


\end{construction}

We finish this subsection with the following important result.

\begin{theorem}\label{T_AV_2}\cite[Theorem 2]{AvgVas22} Let $D$ be a CRC with parameter matrix $A$ in $G_n$ such that $c_i=c_j$, $a_i=a_j$ and
$b_i=b_j$ for some $1\leq i<j\leq \rho-1$. Then $c_l=c_m$, $a_l=a_m$ and
$b_l=a_m$ for any $1\leq l\neq m \leq \rho-1$ and CRC $D$ is obtained by Construction \ref{Constr_ZMult} from a CRC in $G_1$. 
\end{theorem}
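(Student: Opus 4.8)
The plan is to reduce the hypothesis to a block of consecutive interior layers carrying identical tridiagonal parameters, and then to show that such a block rigidly forces the whole code to be a ``diagonal slab,'' i.e. a code obtained from $G_1$ by Construction \ref{Constr_ZMult}.

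First I would exploit the monotonicity of Theorem \ref{T_AV_1}. Since $c_1\le\cdots\le c_\rho$ and $c_i=c_j$ with $i<j$, the sequence $c$ is constant on $\{i,i+1,\ldots,j\}$; likewise $b_0\ge\cdots\ge b_{\rho-1}$ together with $b_i=b_j$ forces $b$ to be constant on the same range. As every row of the parameter matrix sums to the degree $2n$ of $G_n$, the identity $c_l+a_l+b_l=2n$ then makes $a$ constant there as well. Writing $(c,a,b)$ for the common interior triple on the block, this already gives the easy half of the conclusion on $\{i,\ldots,j\}$; the real content is to upgrade ``$(c,a,b)$ on a block'' to ``$D$ is a slab.''

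The heart of the argument is a local analysis of the distance function $g(x)=d(x,C)$ along coordinate lines. For a vertex $x$ with $g(x)=l$ in the block, its $2n$ neighbors $x\pm e_s$ split as $c$ in layer $l-1$, $a$ in layer $l$, and $b$ in layer $l+1$. Grouping the two neighbors of each direction $s$, each direction is of ``up-up,'' ``down-down,'' or ``up-down'' type; if $p,q,r$ count these then $p+q+r=n$, $b=2p+r$, $c=2q+r$, whence $p-q=b-n$ is the same for every block vertex. I would first rule out same-layer edges, showing $a=0$, and then rule out the ``up-up''/``down-down'' directions (strict local extrema of $g$ along a line), which forces $p=q=0$, hence $c=b=n$ and $r=n$: in every coordinate direction exactly one neighbor lies one layer up and the other one layer down. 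I expect this elimination to be \emph{the main obstacle}, because a distance-to-set function can a priori have a strict local minimum along a coordinate line (precisely when the nearest codeword agrees with $x$ in that coordinate), so the exclusion cannot be purely local. The plan is to combine the constancy of the parameters with the global monotonicity of Theorem \ref{T_AV_1}: a strict local extremum propagates to a chain of layers along which the $c$- or $b$-value would be forced to move strictly, contradicting their being constant on the block and monotone outside it.

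Once $c=b=n$ and $a=0$ on the block, each block vertex $x$ carries a sign vector $\epsilon(x)\in\{\pm1\}^n$ with $\epsilon_s(x)=+1$ exactly when $g(x+e_s)=g(x)+1$. A short compatibility check between adjacent vertices shows that $\epsilon$ is locally, hence globally, constant; after applying the coordinate reflections $x_s\mapsto -x_s$ (automorphisms of $G_n$) for the directions with $\epsilon_s=-1$, we may take $\epsilon=(1,\ldots,1)$. Then $g(x+e_s)-g(x)$ depends only on the functional $\phi(x)=\sum_s x_s$, so $g$ itself is a function of $\phi$ throughout ${\bf Z}^n$; propagating this description outward from the block to the extreme layers $0$ and $\rho$ completes the reduction. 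Setting $C'=\{\phi(z):z\in C\}\subseteq{\bf Z}$, the distance partition of $G_1$ with respect to $C'$ pulls back under $\phi$ to the distance partition of $G_n$, so $C'$ is a CRC in $G_1$ and $D=\{x:\phi(x)\in C'\}$ is exactly the image of $C'$ under Construction \ref{Constr_ZMult}. In particular every interior row of $A$ equals $n$ times the interior row $(1,0,1)$ of $C'$, which yields $c_l=c_m$, $a_l=a_m$, $b_l=b_m$ for all $1\le l\ne m\le\rho-1$ simultaneously with the structural statement.
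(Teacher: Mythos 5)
A preliminary remark on the comparison itself: the paper does not prove this statement at all — Theorem \ref{T_AV_2} is quoted from \cite[Theorem 2]{AvgVas22} — so your attempt can only be judged on its own merits, not against an in-paper argument. On those merits, your opening reduction is correct and is the easy part: by Theorem \ref{T_AV_1}, $c_i=c_j$ and $b_i=b_j$ force $c$ and $b$ (hence, via the row sums $c_l+a_l+b_l=2n$, also $a$) to be constant on the whole block of layers $i,\ldots,j$. The genuine gap comes immediately after. The entire content of the theorem is the claim that for a block vertex one has $a=0$ and $p=q=0$, i.e.\ that no coordinate direction is of ``up-up'' or ``down-down'' type, so that $c=b=n$; and at exactly this point your text switches from proof to intention (``I would first rule out\ldots'', ``I expect this elimination to be the main obstacle\ldots'', ``The plan is to combine\ldots''). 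No mechanism is given for how a strict local extremum of $g=d(\cdot,C)$ along a coordinate line ``propagates to a chain of layers'' and contradicts constancy plus monotonicity; this is an expectation, not an argument.

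The gap cannot be waved away, because both configurations you need to exclude genuinely occur in completely regular codes of $G_n$: the distance partition of a singleton-type code (Construction \ref{Constr_H2n2distance}) has ``up-up'' directions at every non-extreme vertex, and the middle layers of Construction \ref{Constr_H2n2distanceanti} or of the halved perfect code (Construction \ref{Constr_halved_pc}) have ``down-down'' directions. Hence no local count can rule them out; the exclusion must make essential, quantitative use of the two-equal-rows hypothesis — for instance through an equality analysis inside the proof of the monotonicity inequalities of Theorem \ref{T_AV_1} (matching the down-directions of a vertex in $C_{l+1}$ injectively into the down-directions of a suitable neighbor in $C_l$, and analyzing when this matching is a bijection). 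Nothing of this kind is formulated in the proposal. The same incompleteness recurs at the end: the local constancy of the sign vector $\epsilon$, and above all the extension of the description from the block layers $i,\ldots,j$ to all layers $0,\ldots,\rho$ (which is what you need to conclude $D=\phi^{-1}(C')$ and that every interior row equals $(n,0,n)$), is again only asserted (``propagating this description outward\ldots completes the reduction''), even though outside the block you no longer know the parameters are constant. So what you have is a plausible strategy whose two decisive steps are missing.
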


\subsection{Null-CRC codes}

The important classes of optimal error-correcting codes in $G_n$ include perfect and diameter perfect codes. A perfect code (also called a $1$-perfect code) in a $k$-regular graph is a set of vertices that intersects every ball of radius one in the graph in exactly one vertex. Equivalently, a perfect code is a CRC with parameter matrix $\left(%
\begin{array}{cc} 0& k \\
1& k-1 \\
\end{array}%
\right)$. Such codes were shown to exist in $G_n$ for any $n  \geq  1$ in the work of Golomb and Welsh \cite{GW70}.

\begin{construction}\label{Constr_halved_pc}  The collection of even weight words of a perfect code in $G_n$  is a completely regular code with parameter matrix $$\left(%
\begin{array}{cccc} 0& 2n & 0 & 0 \\
1& 0 & 2n-1 & 0\\
0& 2n-1 & 0 & 1 \\
0& 0 & 2n & 0 
\\\end{array}%
\right).$$   
\end{construction}

A code $C$ is called a diameter perfect code in $G_n$ if the union of any two balls of radius $1$ with adjacent centers contains exactly one vertex of $C$ \cite{Et11}. In \cite[Section II E]{Et11} the existence of such subgroup codes in $G_n$ for any $n\geq 1$ was settled. In particular, because the codes are subgroups of ${\bf Z}^n$, there is a partition of $G_n$ into disjoint diameter perfect codes.
 The union of $t$ disjoint such codes with words of the same overall parity check forms a completely regular, see \cite[Proposition 7.23]{KKM25}. 
 Such completely regular codes exist for all  $t$, $n\geq 1$.

\begin{construction}\label{Constr_diam}
 The union of $t$ disjoint diameter perfect codes in $G_n$ with words of even weight is completely regular with  parameter matrix
$$\left(%
\begin{array}{ccc} 0& 2n & 0  \\
t& 0 & 2n-t \\
 0 & 2n & 0 
\\\end{array}%
\right).$$ 

\end{construction}

As the Hamming graph is distance-regular, its distance partition  with respect to a singleton vertex is a perfect coloring. From Construction \ref{Constr_H2n2} we obtain the following.

\begin{construction}\label{Constr_H2n2distance} 
The code $\{x:x_i \mbox{ mod } 4=0, i\in \{1,\ldots,n\} \}$ is an all-nulls CRC in $G_n$ with covering radius $\rho=2n$  and  parameter matrix 

$$\left(%
\begin{array}{cccccc} 0& 2n & 0 & 0 & \ldots & 0 \\
1& 0 & 2n-1 & 0 & \ldots & 0 \\
0& 2 & 0 & 2n-2 & \ldots& 0  
\\
\ldots

\\
0 &0&\ldots & 2n-1 &0 & 1 \\

0& 0 &\ldots & 0 &2n & 0 
\\\end{array}%
\right)$$
\end{construction}

By merging the code and its opposite in the above construction  we obtain.
\begin{construction}\label{Constr_H2n2distanceanti}
The code $\bigcup _{a=0,2}\{x:x_i \mbox{ mod } 4=a, i\in \{1,\ldots,n\} \}$ is an all-nulls CRC in $G_n$ with covering radius $\rho=n$ and  parameter matrix 

$$\left(%
\begin{array}{cccccc} 0& 2n & 0 & 0 & \ldots & 0 \\
1& 0 & 2n-1 & 0 & \ldots & 0 \\
0& 2 & 0 & 2n-2 & \ldots& 0  \\
&&&\ldots&\\
0& &\ldots & n-1 & 0 & n+1  

\\
0&  &\ldots & 0 &2n & 0 
\\\end{array}%
\right)$$
\end{construction}

We finish with an example of an all-ones CRC. If an all-nulls CRC in $H(2n-1,2)$ with parameter matrix $A$ exists, then  the code $$\{(x_1,\ldots,x_{2n-1},a):x \in C, a\in\{0,1\}\}$$  is a CRC in $H(2n,2)$ with the parameter  matrix $A+E$ \cite{FDF07}. For $C$ being a singleton of $H(2n-1,2)$ we obtain the following from Construction \ref{Constr_H2n2}.

\begin{construction}
For any $n$ there is  all-ones CRC in $G_n$ with $\rho=2n-1$ and  parameter matrix $$\left(%
\begin{array}{cccccc} 1& 2n-1 & 0 & 0 & \ldots & 0 \\
1& 1 & 2n-2 & 0 & \ldots & 0 \\
0& 2 & 1 & 2n-3 & \ldots& 0  
\\
\ldots
\\
\ldots
\\
0& 0 &\ldots & 0 &2n-1 & 1 
\\\end{array}%
\right).$$
\end{construction}



\section{Local structure of $n$-dimensional infinite grid}

Let  $u \in {\bf Z}^n$ be a word of weight not more than $4$. We say $u$ is of type  $abcd$  if  
the sequence of the absolute values of symbols of $u$ is $abcd$ in descending order. For example, the word $(1000\text{-}2 1 )$ is of type $2110$. The type of a word $u$ is called Lee composition vector of $u$ in   \cite{Tar87}.  Clearly, the sets of the words having  fixed type and weight are   orbits of the stabilizer of all-zero word in the automorphism group of the graph $G_n$. The weight two words  are only of
types $2000$ and $1100$. Weight three and four words are split into types $3000$, $2100$,  $1110$ and  types $4000$, $3000$, $2110$, $2200$, $1111$ respectively. 

 Let a cap  be the set of all weight four words
with absolute value $3$ in a fixed position, i.e. caps are the following sets: $$\{3e_i\pm e_j: j \in \{1,\ldots, n\}\setminus \{i\}\}, 
\hphantom{-ai=1,\ldots,n.}$$ 
$$\{-3e_i\pm e_j: j \in \{1,\ldots, n\}\setminus \{i\}\}, \ i=1,\ldots,n.$$ 
We see that the set of all type $3100$ words is the disjoint union of all caps.

Define  the interval graph $I(x)$ of a vertex $x$ as the  subgraph of $G_n$ induced by the following vertex set  
$\{y: d(y,x)+d(y,{\bf 0})=d(x,y)\}$.  Up to types, the interval subgraphs for all weight four  vertices are depicted on Fig.1.

$\begin{tikzpicture}[scale=2] 

\node at (2, 0) {\textbf{Type} 4000 };
\begin{scope}[rotate around={0:(0,0)}]

\node (v0000) at (0,0) [circle, draw, fill=white, inner sep=3pt] {}; 
\node (v1000) at (0.5,0.5) [circle, draw, fill=white, inner sep=3pt] {}; 
Distance 1
\node (v2000) at (1,1) [circle, draw, fill=white, inner sep=3pt] {}; 
\node (v3000) at (1.5,1.5) [circle, draw, fill=white, inner sep=3pt] {}; 
\node (v4000) at (2,2) [circle, draw, fill=white, inner sep=3pt] {}; 

\node[ left] at (v0000) {$0000$\,\,};
\node[left] at (v1000) {$1000$\,\,};
\node[left] at (v2000) {$2000$\,\,};
\node[  left] at (v3000) {$3000$\,\,};
\node[ left] at (v4000) {$4000$\,\,};

\draw (v0000) -- (v1000);
\draw (v1000) -- (v2000);
\draw (v2000) -- (v3000);
\draw (v3000) -- (v4000);

\end{scope}
\end{tikzpicture}$
$\begin{tikzpicture}[scale=2] 

\node at (2, 0) {\textbf{Type $2200$}};
\begin{scope}[rotate around={45:(0,0)}]

\node (v0000) at (0,0) [circle, draw, fill=white, inner sep=3pt] {}; 
\node (v1000) at (1,0) [circle, draw, fill=white, inner sep=3pt] {}; 
\node (v0100) at (0,1) [circle, draw, fill=white, inner sep=3pt] {}; 
\node (v1100) at (1,1) [circle, draw, fill=white, inner sep=3pt] {}; 
\node (v2000) at (2,0) [circle, draw, fill=white, inner sep=3pt] {}; 
\node (v2100) at (2,1) [circle, draw, fill=white, inner sep=3pt] {}; 
\node (v0200) at (0,2) [circle, draw, fill=white, inner sep=3pt] {}; 
\node (v1200) at (1,2) [circle, draw, fill=white, inner sep=3pt] {}; 
\node (v2200) at (2,2) [circle, draw, fill=white, inner sep=3pt] {}; 

\node[below left] at (v0000) {$0000$};
\node[below] at (v1000) {$1000$};
\node[left] at (v0100) {$0100$};
\node[above left] at (v1100) {$1100$};
\node[below right] at (v2000) {$2000$};
\node[above right] at (v2100) {$2100$};
\node[above left] at (v0200) {$0200$};
\node[above] at (v1200) {$1200$};
\node[above right] at (v2200) {$2200$};

\draw (v0000) -- (v1000);
\draw (v0000) -- (v0100);
\draw (v1000) -- (v2000);
\draw (v1000) -- (v1100);
\draw (v0100) -- (v1100);
\draw (v0100) -- (v0200);
\draw (v1100) -- (v2100);
\draw (v1100) -- (v1200);
\draw (v2000) -- (v2100);
\draw (v0200) -- (v1200);
\draw (v1200) -- (v2200);
\draw (v2100) -- (v2200);

\end{scope}
\end{tikzpicture}$

$\begin{tikzpicture}[scale=2] 

\node at (1.5, 0) {\textbf{Type $3100$}};
\begin{scope}[rotate around={45:(0,0)}]

\node (v0000) at (0,0) [circle, draw, fill=white, inner sep=3pt] {}; 
\node (v1000) at (1,0) [circle, draw, fill=white, inner sep=3pt] {}; 
\node (v0100) at (0,1) [circle, draw, fill=white, inner sep=3pt] {}; 
\node (v1100) at (1,1) [circle, draw, fill=white, inner sep=3pt] {}; 
\node (v2000) at (2,0) [circle, draw, fill=white, inner sep=3pt] {}; 
\node (v2100) at (2,1) [circle, draw, fill=white, inner sep=3pt] {}; 
\node (v3000) at (3,0) [circle, draw, fill=white, inner sep=3pt] {}; 
\node (v3100) at (3,1) [circle, draw, fill=white, inner sep=3pt] {}; 

\node[below left] at (v0000) {$0000$};
\node[below] at (v1000) {$1000$};
\node[left] at (v0100) {$0100$};
\node[above left] at (v1100) {$1100$};
\node[below right] at (v2000) {$2000$};
\node[above right] at (v2100) {$2100$};
\node[below right] at (v3000) {$3000$};
\node[above right] at (v3100) {$3100$};

\draw (v0000) -- (v1000);
\draw (v0000) -- (v0100);
\draw (v1000) -- (v2000);
\draw (v1000) -- (v1100);
\draw (v0100) -- (v1100);
\draw (v1100) -- (v2100);
\draw (v2000) -- (v3000);
\draw (v2000) -- (v2100);
\draw (v2100) -- (v3100);
\draw (v3000) -- (v3100);

\end{scope}
\end{tikzpicture}$
$\begin{tikzpicture}[scale=2] 


\node at (1.5, 0) {\textbf{Type $2110$}};
\node (v0000) at (0,0) [circle, draw, fill=white, inner sep=3pt] {}; 
\node (v1000) at (-0.75,1) [circle, draw, fill=white, inner sep=3pt] {}; 
\node (v0100) at (0,1) [circle, draw, fill=white, inner sep=3pt] {}; 
\node (v0010) at (1,1) [circle, draw, fill=white, inner sep=3pt] {}; 
\node (v1100) at (-0.75,2) [circle, draw, fill=white, inner sep=3pt] {}; 
\node (v0110) at (1,2) [circle, draw, fill=white, inner sep=3pt] {}; 
\node (v1010) at (0.25,2) [circle, draw, fill=white, inner sep=3pt] {}; 
\node (v2000) at (-1.5,2) [circle, draw, fill=white, inner sep=3pt] {}; 
\node (v1110) at (0.25,3) [circle, draw, fill=white, inner sep=3pt] {}; 
\node (v2010) at (-0.5,3) [circle, draw, fill=white, inner sep=3pt] {}; 
\node (v2100) at (-1.5,3) [circle, draw, fill=white, inner sep=3pt] {}; 
\node (v2110) at (-0.5,4) [circle, draw, fill=white, inner sep=3pt] {}; 

\node[below] at (v0000) {$0000$};
\node[left] at (v1000) {$1000$};
\node[below] at (v0100) {$0100$};
\node[right] at (v0010) {$0010$};
\node[below] at (v1100) {$1100$};
\node[below right] at (v0110) {$0110$};
\node[below] at (v1010) {$1010$};
\node[left] at (v2000) {$2000$};
\node[right] at (v2100) {$2100$};
\node[below] at (v2010) {$2010$};
\node[right] at (v1110) {$1110$};
\node[above] at (v2110) {$2110$};

\draw (v0000) -- (v1000);
\draw (v0000) -- (v0100);
\draw (v0000) -- (v0010);
\draw (v1000) -- (v1100);
\draw (v1000) -- (v1010);
\draw (v1000) -- (v2000);
\draw (v0100) -- (v1100);
\draw (v0100) -- (v0110);
\draw (v0010) -- (v0110);
\draw (v0010) -- (v1010);
\draw (v2000) -- (v2100);
\draw (v1100) -- (v1110);
\draw (v0110) -- (v1110);
\draw (v1010) -- (v1110);
\draw (v1010) -- (v2010);
\draw (v2010) -- (v2110);
\draw (v1100) -- (v2100);
\draw (v2000) -- (v2010);
\draw (v2100) -- (v2110);
\draw (v1110) -- (v2110);

\end{tikzpicture}$

$\begin{tikzpicture}[scale=2]

\node at (2, 0) {\textbf{Type $1111$}};

\node (v0000) at (0,0) [circle, draw, fill=white, inner sep=3pt] {}; 
\node (v1000) at (-1,1) [circle, draw, fill=white, inner sep=3pt] {}; 
\node (v0100) at (0,1) [circle, draw, fill=white, inner sep=3pt] {}; 
\node (v0010) at (1,1) [circle, draw, fill=white, inner sep=3pt] {}; 
\node (v1100) at (-1,2) [circle, draw, fill=white, inner sep=3pt] {}; 
\node (v0110) at (1,2) [circle, draw, fill=white, inner sep=3pt] {}; 
\node (v1010) at (0,2) [circle, draw, fill=white, inner sep=3pt] {}; 
\node (v1110) at (0,3) [circle, draw, fill=white, inner sep=3pt] {}; 

\node (v0001) at (2.5,1.0) [circle, draw, fill=white, inner sep=3pt] {}; 
\node (v1001) at (1.5,2.0) [circle, draw, fill=white, inner sep=3pt] {}; 
\node (v0101) at (2.5,2.0) [circle, draw, fill=white, inner sep=3pt] {}; 
\node (v0011) at (3.5,2.0) [circle, draw, fill=white, inner sep=3pt] {}; 
\node (v1101) at (1.5,3.0) [circle, draw, fill=white, inner sep=3pt] {}; 
\node (v0111) at (3.5,3.0) [circle, draw, fill=white, inner sep=3pt] {}; 
\node (v1011) at (2.5,3.0) [circle, draw, fill=white, inner sep=3pt] {}; 
\node (v1111) at (2.5,4.0) [circle, draw, fill=white, inner sep=3pt] {}; 

\draw (v0000) -- (v0001);
\draw (v1000) -- (v1001);
\draw (v0100) -- (v0101);
\draw (v0010) -- (v0011);
\draw (v1100) -- (v1101);
\draw (v0110) -- (v0111);
\draw (v1010) -- (v1011);
\draw (v1110) -- (v1111);

\draw (v0000) -- (v1000);
\draw (v0000) -- (v0100);
\draw (v0000) -- (v0010);
\draw (v1000) -- (v1010);
\draw (v1000) -- (v1100);
\draw (v0100) -- (v0110);
\draw (v0100) -- (v1100);
\draw (v0010) -- (v0110);
\draw (v0010) -- (v1010);
\draw (v1010) -- (v1110);
\draw (v1100) -- (v1110);
\draw (v0110) -- (v1110);

\draw (v0001) -- (v1001);
\draw (v0001) -- (v0101);
\draw (v0001) -- (v0011);
\draw (v1001) -- (v1011);
\draw (v1001) -- (v1101);
\draw (v0101) -- (v0111);
\draw (v0101) -- (v1101);
\draw (v0011) -- (v0111);
\draw (v0011) -- (v1011);
\draw (v1011) -- (v1111);
\draw (v1101) -- (v1111);
\draw (v0111) -- (v1111);

\node[below] at (v0000) {$0000$};
\node[left] at (v1000) {$1000$};
\node[right] at (v0100) {$0100$};
\node[right] at (v0010) {$0010$};
\node[left] at (v1100) {$1100$};
\node[left] at (v0110) {$0110$};
\node[below] at (v1010) {$1010$};
\node[above] at (v1110) {$1110$};
\node[below] at (v0001) {$0001$};
\node[right] at (v1001) {$1001$};
\node[right] at (v0101) {$0101$};
\node[right] at (v0011) {$0011$};
\node[left] at (v1101) {$1101$};
\node[right] at (v0111) {$0111$};
\node[below] at (v1011) {$1011$};
\node[above] at (v1111) {$1111$};

\end{tikzpicture}$ 

$$$$
 
\begin{tikzpicture}
\node[anchor=base west] at (3, 1) {Fig. 1. \textbf{The interval graphs for weight four words.}};
\node[anchor=base west] at (3, 0.4) {\textbf{The last $n-4$ all-zero symbols are omitted in the figures}};
\node[anchor=base west] at (3, -0.2) {\textbf{throughout the paper for simplicity.}};
\end{tikzpicture}

\section{Null CRCs with $c_2\leq 3$}

In the current section classes of $2$-null and $3$-null CRCs are considered.
We start with  $2$-null CRCs such that $c_1<c_2\leq 3$ (note that $c_1$ is necessarily not greater than $c_2$ due to Theorem \ref{T_AV_1}). Throughout the current and next two sections we  assume that the all-zero word ${\bf 0}$ is in $C$.
\begin{remark} 1.  Since ${\bf 0}$ is in $C$, when $a_{0}=0$, the weight one words are all in $C_1$. If, moreover, $a_{1}=0$, then weight two words are a subset of $C\cup C_2$. The condition $c_1=1$, $a_{0}=a_{1}=0$ is 
equivalent to the minimum distance of $C$ being at least  four.  Since the minimum distance of any CRC in $G_n$ is not greater than four \cite[Theorem 4]{AvgVas22} it is exactly four.

2. Note that if a weight four word $x$ is in a $2$-null  CRC $C$, then, given that $a_0=a_1=0$, all weight one and three words in the interval graph of $x$ are in $C_1$ and weight two words are in $C$ or $C_2$.   
\end{remark} 

In case of  a $2$-null  CRC $C$, any type $1100$ word is adjacent to two words of weight one, which are in $C_1$, we obtain the following.

\begin{lemma}\label{l_a21-2}
 If $C$ is a $2$-null CRC in $G_n$, $n\geq 2$ with $\rho\geq 2$  then any type $1100$ word in $C_2$ is adjacent to exactly $c_2-2$ weight three words in $C_1$.  
\end{lemma}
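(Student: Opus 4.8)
The plan is to read off the local neighborhood of a type $1100$ word and then apply the definition of the parameter $c_2$ directly; the argument is essentially a neighbor count.

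First I would fix a type $1100$ word $x\in C_2$. By the local structure described in Section 4 (equivalently, after applying an automorphism of $G_n$ fixing ${\bf 0}$ so that $x=e_i+e_j$ for distinct $i,j$), its $2n$ neighbors split as follows: the two words $x-e_i$ and $x-e_j$, both of weight one, together with $2n-2$ words of weight three, namely the two type $2100$ words $x+e_i,x+e_j$ and the $2(n-2)$ type $1110$ words $x\pm e_k$ with $k\notin\{i,j\}$. Thus $x$ has exactly two neighbors of weight one and exactly $2n-2$ neighbors of weight three.

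Next I would locate the weight-one neighbors in the distance partition. Since ${\bf 0}\in C_0$ and $a_0=0$, every neighbor of ${\bf 0}$, i.e.\ every weight-one word, lies in $C_1$; in particular the two weight-one neighbors of $x$ lie in $C_1$.

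Finally I would count. As $x\in C_2$, all its neighbors lie in $C_1\cup C_2\cup C_3$, and by definition $c_2$ is the number of them lying in $C_1$. Two of these $C_1$-neighbors are the weight-one words just identified, so the remaining $c_2-2$ neighbors in $C_1$ must be among the $2n-2$ weight-three neighbors. Hence exactly $c_2-2$ weight-three words adjacent to $x$ lie in $C_1$, which is the assertion (and incidentally shows $c_2\geq 2$). The only step needing any care is the neighborhood enumeration of the first paragraph — confirming that a type $1100$ word has precisely two weight-one neighbors while all its other neighbors have weight three; beyond this bookkeeping there is no real obstacle.
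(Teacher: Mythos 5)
Your proposal is correct and follows essentially the same argument as the paper: the paper justifies this lemma in one sentence by observing that a type $1100$ word has exactly two weight-one neighbors, both forced into $C_1$ by $a_0=0$ and the convention ${\bf 0}\in C$, so the remaining $c_2-2$ of its $C_1$-neighbors must be among its weight-three neighbors. Your more explicit neighborhood enumeration and counting is just a fleshed-out version of the same reasoning.
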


\begin{corollary}\label{coro_1} Any $2$-null CRC in $G_n$, $n\geq 1$ with $\rho\geq 2$, $c_1=1$ and $c_2=2$ is obtained by Construction \ref{Constr_H2n2} from a CRC in $H(2n,2)$.
\end{corollary}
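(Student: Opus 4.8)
The plan is to reduce the statement to a periodicity claim and then exploit the hypothesis $c_2=2$ together with Lemma \ref{l_a21-2}. By the remark following Construction \ref{Constr_Gnq}, it suffices to prove that $4$ is a period of $C$ in every coordinate direction, i.e. $x\pm 4e_i\in C$ for every $x\in C$; once this holds, $C$ descends to a CRC in $G_{n,4}\cong H(2n,2)$ with the same parameter matrix, which is precisely Construction \ref{Constr_H2n2}. Since the entire argument is local around one codeword and $G_n$ is vertex-transitive, I may assume $\mathbf 0\in C$ and prove $4e_1\in C$; translating by an arbitrary codeword and reflecting the direction then upgrade this to full periodicity.

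First I would pin down the local structure. Since $c_1=1$ and $a_0=a_1=0$, the minimum distance of $C$ equals $4$ (as observed above), so no word of weight $1,2,3$ lies in $C$. Every weight-one word is adjacent to $\mathbf 0$, hence lies in $C_1$. For a weight-two word $w$ (type $2000$ or $1100$), all of its neighbours have weight $1$ or $3$: the weight-one ones lie in $C_1$ and the weight-three ones are not codewords, so $w$ has no codeword neighbour and therefore $w\in C_2$.

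The core step uses $c_2=2$. The type $2000$ word $2e_1\in C_2$ has the single weight-one neighbour $e_1\in C_1$, so exactly one of its $2n-1$ weight-three neighbours $3e_1$, $2e_1\pm e_j$ $(j\neq 1)$ lies in $C_1$. Now each $2e_1\pm e_j$ is also a weight-three neighbour of the type $1100$ word $e_1\pm e_j\in C_2$; by Lemma \ref{l_a21-2} a type $1100$ word in $C_2$ has $c_2-2=0$ weight-three neighbours in $C_1$, so none of the words $2e_1\pm e_j$ lies in $C_1$. Hence the unique weight-three neighbour of $2e_1$ in $C_1$ is $3e_1$, and $3e_1\in C_1$ must have a codeword neighbour. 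Among its neighbours, $2e_1\in C_2$ is excluded, and each $3e_1\pm e_j$ is excluded as well: if $3e_1\pm e_j\in C$, then its neighbour $2e_1\pm e_j$ would lie in $C_1$, contradicting the previous sentence. Therefore the codeword neighbour of $3e_1$ can only be $4e_1$, so $4e_1\in C$.

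The last step is routine. The argument used only that $\mathbf 0\in C$, so applying it to the translated code $C-x$ (again a $2$-null CRC with $c_1=1$, $c_2=2$) gives $x+4e_1\in C$ for every $x\in C$, and the reflection $e_1\mapsto -e_1$ gives $x-4e_1\in C$; the same holds in every direction, so $4$ is a period of $C$ in each direction. The case $n=1$, excluded from Lemma \ref{l_a21-2}, is handled directly: $2e_1\in C_2$ has the two neighbours $e_1,3e_1$, both forced into $C_1$ by $c_2=2$, and $3e_1\in C_1$ then forces $4e_1\in C$. I expect the main obstacle to be the middle paragraph—showing that the unique weight-three neighbour of $2e_1$ in $C_1$ must be $3e_1$ rather than some $2e_1\pm e_j$, and that this neighbour in turn forces $4e_1$ (and not a type $3100$ codeword)—since this is exactly where the hypothesis $c_2=2$ and Lemma \ref{l_a21-2} are indispensable.
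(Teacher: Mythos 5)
Your proof is correct, and while it shares the paper's overall skeleton---assume ${\bf 0}\in C$, analyse the type $2000$ word $2e_1\in C_2$, conclude $4e_1\in C$, and deduce period four and hence Construction~\ref{Constr_H2n2}---the decisive middle step is carried out by a different mechanism. The paper passes to the unique nonzero codeword $y$ at distance two from $2e_1$, notes that $y$ must have weight four, and excludes the types $3100$ and $2110$ by inspecting the interval graphs of Fig.~1: each contains a type $1100$ word which would lie in $C_2$ yet have at least three neighbours of weights one and three in $C_1$, contradicting $c_2=2$. You instead apply Lemma~\ref{l_a21-2} directly to the type $1100$ words $e_1\pm e_j\in C_2$ to conclude that none of the words $2e_1\pm e_j$ lies in $C_1$; this pins $3e_1$ down as the second $C_1$-neighbour of $2e_1$ and then forces the unique codeword neighbour of $3e_1$ to be $4e_1$, eliminating all off-axis weight-four candidates (types $3100$, $2200$, $2110$) in one stroke rather than type by type. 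Your route buys three things: it avoids the figure-based case analysis entirely; it automatically covers the type $2200$ candidate, which the paper's explicit case list ($3100$, $2110$ only) omits even though $2e_1+2e_j$ is also at distance two from $2e_1$ (the paper's interval-graph argument would handle it, but the text does not say so); and it spells out both the $n=1$ case, where Lemma~\ref{l_a21-2} does not apply because it is stated for $n\geq 2$, and the translation/reflection argument upgrading $4e_1\in C$ to full periodicity, both of which the paper leaves implicit.
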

\begin{proof}

Note that since $c_1=1$, all  weight two and three words in the  interval graph of any word of weight four from $C$ are in $C_2$ and $C_1$  respectively.  
 

In $G_n$, any  word $x$ of type $2000$ e.g. $(20\ldots0)$ (which is in $C_2$) is adjacent to only one word of weight one, $(10\ldots0)$ (which is in $C_1$).  Since $c_2$ is $2$ by Lemma \ref{l_a21-2}, $x$ is at distance two from exactly one word $y$ and by Remark 2.1 $y$ cannot be of weight two, so it is of weight four. The word $y$ cannot be of types $3100$ or $2110$.
Indeed, from Fig.1  the interval graphs of these words contain a word of type $1100$ which by Remark 2.1 is in $C_2$. In the interval graphs of types $3100$ or $2110$, this word is adjacent to at least three words of weights one and three. Since, by Remark 2.2, all these words are in $C_1$, this contradicts the parameter $c_1=2$.  We conclude that the word $y$ is $(40\ldots0)$ of type $4000$. Extending this argument to all type $2000$ words  we conclude that the weight four words in $C$ are exactly all type $4000$ words. Thus, we see that the code has period four and is obtained by Construction \ref{Constr_H2n2}. 

\end{proof}

\begin{lemma}\label{l_interval}
 Let $C$ be a $2$-null CRC in $G_n$, $n\geq 2$ with $\rho\geq 2$, $c_1\leq 2$ and $c_2\geq 3$. Then 
 any word of type $1100$ in $C_2$ is in the interval graph of a weight four word in $C$.
\end{lemma}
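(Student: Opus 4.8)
The plan is to construct, for a given type $1100$ word $w\in C_2$, a weight-four codeword $x\in C$ with $w\in I(x)$, by building outward from $w$ along a geodesic. The first step is a purely metric observation. If $z$ is any weight-three word adjacent to $w$, then $d({\bf 0},z)=3=d({\bf 0},w)+d(w,z)$, so $w\in I(z)$; and if $x$ is any weight-four word adjacent to such a $z$, then $d({\bf 0},x)=4=d({\bf 0},z)+d(z,x)$, so $z\in I(x)$, and hence $w\in I(x)$ by additivity of distance along the chain ${\bf 0},w,z,x$. Thus it suffices to exhibit a weight-three neighbor $z$ of $w$ lying in $C_1$ that has a weight-four neighbor in $C$.

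By Lemma \ref{l_a21-2}, $w$ has exactly $c_2-2\geq 1$ weight-three neighbors in $C_1$ (using $c_2\geq 3$); fix one such $z$. Every weight-three neighbor of a type $1100$ word is of type $2100$ or $1110$, so $z$ is of one of these two types. Since $a_1=0$, all $2n$ neighbors of $z$ lie in $C\cup C_2$, and exactly $c_1\in\{1,2\}$ of them lie in $C$; as the neighbors of $z$ have weight $2$ or $4$, I must rule out the possibility that every $C$-neighbor of $z$ has weight two. Note that $w$ itself is a weight-two neighbor of $z$ but lies in $C_2$, so it is never available as a weight-two codeword neighbor.

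Two cases are immediate. If $c_1=1$ then, by Remark 2.1, the minimum distance of $C$ equals four, so $C$ contains no weight-two word (any such word is within distance two of ${\bf 0}\in C$); hence the unique $C$-neighbor of $z$ has weight four and we are done. If $c_1=2$ and $z$ is of type $2100$, say $z=(21 0\cdots 0)$, then apart from $w$ its only weight-two neighbor is the type $2000$ word $(20\cdots0)$; since $z$ needs two neighbors in $C$ while only one weight-two candidate is available, at least one $C$-neighbor of $z$ has weight four.

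The remaining and most delicate case, which I expect to be the main obstacle, is $c_1=2$ with $z=e_1+e_2+\epsilon e_j$ of type $1110$, where $\epsilon=\pm1$ occupies some coordinate $j\geq 3$. Here $z$ has two weight-two neighbors besides $w$, namely $u_1=e_2+\epsilon e_j$ and $u_2=e_1+\epsilon e_j$, both of type $1100$, and the danger is that both lie in $C$, leaving $z$ with no weight-four codeword neighbor. I will rule this out by examining the weight-one word $\epsilon e_j$, which lies in $C_1$: it is adjacent to ${\bf 0}$ (remove coordinate $j$), to $u_1$ (add $e_2$), and to $u_2$ (add $e_1$). If $u_1,u_2\in C$, then $\epsilon e_j$ would have the three distinct codeword neighbors ${\bf 0},u_1,u_2$, forcing $c_1\geq 3$ and contradicting $c_1\leq 2$. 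Hence at most one weight-two neighbor of $z$ lies in $C$, so at least one of its two codeword neighbors has weight four. In every case $z$ has a weight-four neighbor $x\in C$, and by the first step $w\in I(x)$, as required.
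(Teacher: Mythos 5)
Your proof is correct and follows essentially the same route as the paper's: both invoke Lemma \ref{l_a21-2} to produce a weight-three neighbor $z\in C_1$ of the given type $1100$ word, then rule out the possibility that all $c_1$ codeword neighbors of $z$ have weight two, splitting into $c_1=1$ (minimum distance four via Remark 2.1) and $c_1=2$ with $z$ of type $2100$ or $1110$, where the contradiction comes from a weight-one word acquiring three codeword neighbors. Your version adds two small refinements absent from the paper --- the explicit geodesic-additivity check that $w\in I(x)$, and the observation that $w\in C_2$ itself occupies one of the weight-two slots (which settles the $2100$ case immediately) --- but these do not change the underlying argument.
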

\begin{proof}
By Lemma \ref{l_a21-2}, any type $1100$   word in $C_2$ is adjacent to a word $z$ of weight three in $C_1$.  If $z$ is adjacent to a word of weight four in $C$, we are done, so we suppose that $z$ is adjacent   only to weight two words in $C$. By Remark 2.1, the case $c_1=1$ implies that the minimum distance of $C$ is four, a contradiction. So, $c_1$ is $2$ and $z$ is adjacent to exactly two words of weight two in $C$. Up to isomorphism of $G_n$, the word $z$ is $(3000\ldots0)$, $(2100\ldots0)$ or  
 $(1110\ldots0)$. The word $(3000\ldots0)$ is  adjacent to only one weight two word in $G_n$, so it is not $z$. The word   $(2100\ldots0)$ is adjacent to only two words of weight two:
   $(1100\ldots0)$ and 
     $(2000\ldots0)$ and we see that $(1000\ldots 0)\in C_1$ is adjacent to three words in $C$, a contradiction with $c_1=2$. Similarly, in the case when $z$ is $(1110\ldots0)$, up to permutation of three first positions, the weight two neighbors of $z$ are    $(1100\ldots0)$ and    $(1010\ldots0)$, giving the same contradiction with $c_1=2$ for the word $(10\ldots0)$ in $C_1$.

\end{proof}

\begin{lemma}\label{l_main}  Let $C$ be any $2$-null CRC in $G_n$, $n\geq 2$,  with   $\rho\geq 2$, $c_1\leq 2$ and $c_2=3$. Then the following holds:

1. Any cap contains not more than one word in $C$. In particular, there are not more than $2n$ type $3100$ words in $C$.

2. Any type $1100$ word in $C_2$
 is at distance two from exactly one word in $C$ of weight four, which is necessarily of type $3100$.

\end{lemma}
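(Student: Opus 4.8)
The plan is to fix, using the automorphism group of $G_n$, the type $1100$ word under consideration as $w=e_1+e_2$. Since $w\in C_2$ and $c_2=3$, Lemma \ref{l_a21-2} tells me that $w$ has exactly $c_2-2=1$ weight-three neighbour in $C_1$; call it $z$, so the three $C_1$-neighbours of $w$ are $e_1$, $e_2$ and $z$, and a priori $z$ has type $2100$ or $1110$. The principle driving both parts is that a weight-four word $x\in C$ lies at distance two from $w$ if and only if it is a neighbour of $z$: any length-two path from $w\in C_2$ to $x\in C_0$ has a midpoint $v$ that is adjacent to a $C_0$-vertex and to a $C_2$-vertex, hence $v\in C_1$, so $v\in\{e_1,e_2,z\}$; as $v$ must be adjacent to the weight-four word $x$ it has weight three, forcing $v=z$. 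Moreover any weight-four $x$ at distance two from $w$ automatically satisfies $w\in I(x)$, since $d({\bf 0},w)+d(w,x)=2+2=4=d({\bf 0},x)$.

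For part 2, I would first classify the possible types of a weight-four $x$ with $w\in I(x)$: writing $x=w+u$ with $u$ an outward step of weight two, the only possibilities are $3100$, $2200$, $2110$ and $1111$. I then rule out the latter three by exhibiting in each case two weight-three vertices of $I(x)$ that are both adjacent to $w$ — for instance $2e_1+e_2$ and $e_1+2e_2$ for type $2200$, $2e_1+e_2$ and $e_1+e_2+e_3$ for type $2110$, and $e_1+e_2+e_3$ and $e_1+e_2+e_4$ for type $1111$. By Remark 2.2 these lie in $C_1$, so $w$ would have at least two weight-three neighbours in $C_1$, contradicting Lemma \ref{l_a21-2}. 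Hence every such $x$ has type $3100$. Existence of at least one $x$ is exactly Lemma \ref{l_interval}; for uniqueness I use that $x$ must be a type-$3100$ neighbour of the single word $z$, and a type-$2100$ word has exactly one type-$3100$ neighbour while a type-$1110$ word has none. Thus $z$ is forced to have type $2100$ and $x$ is its unique type-$3100$ neighbour.

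For part 1, I would consider a cap, say $\{3e_1\pm e_j:j\neq 1\}$, and observe that all of its words are adjacent to the single type-$3000$ word $3e_1$. If some cap word $s\in C$, then reading $3e_1$ off as a weight-three vertex of $I(s)$ gives $3e_1\in C_1$ by Remark 2.2. If $c_1=1$, then $3e_1$ has a unique neighbour in $C$, so at most one cap word lies in $C$. If $c_1=2$, suppose two cap words $s,s'$ were in $C$; by Remark 2.2 the type-$2100$ words $s-e_1$, $s'-e_1$ and the word $3e_1$ (the weight-three vertices of $I(s)$ and $I(s')$) all lie in $C_1$. Since $s,s'$ already occupy both $C$-slots of $3e_1$, we get $2e_1\notin C$, hence $2e_1\in C_2$ by Remark 2.1. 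But then $2e_1$ is adjacent to the four distinct $C_1$-words $e_1$, $3e_1$, $s-e_1$, $s'-e_1$, contradicting $c_2=3$. Either way a cap contains at most one codeword, and since the type-$3100$ words are the disjoint union of the $2n$ caps, there are at most $2n$ type-$3100$ codewords.

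The main obstacle I expect is the bookkeeping in the $c_1=2$ branch of part 1: one must check that the two cap codewords genuinely exhaust the $C$-neighbourhood of $3e_1$ and that $2e_1$ thereby acquires \emph{four distinct} neighbours in $C_1$, which means extracting $C_1$-membership of the relevant type-$2100$ and type-$3000$ words from the interval graphs (Fig.~1) of both cap codewords and verifying the four words are pairwise distinct for both sign patterns (the two cap words of equal leading sign and the two of opposite leading sign). The type-classification step in part 2 is routine once the ``distance two $\Leftrightarrow$ neighbour of $z$'' principle is established, but some care is needed to treat the two type-$2100$ candidates for $z$ uniformly.
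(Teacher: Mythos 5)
Part 1 of your proof is correct and matches the paper's in substance: your observation that every cap word is adjacent to $3e_1$, combined with Remark 2.2 (placing $3e_1$, $s-e_1$, $s'-e_1$ in $C_1$) and Remark 2.1 (placing $2e_1$ in $C\cup C_2$), yields the same contradiction the paper reaches. Your version even unifies the paper's two cases (the two cap words having their $\pm1$ in the same position or in different positions) and covers $c_1=1$ by counting $C$-neighbours of $3e_1$ where the paper instead invokes minimum distance four; both are fine.

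Part 2, however, has a genuine gap, and it sits exactly where the paper does its hardest work. Your classification of the weight-four $x$ with $w=e_1+e_2\in I(x)$ into types $3100$, $2200$, $2110$, $1111$ is correct, but type $2110$ splits into two orbits under the stabilizer of $w$: (i) $x=2e_1+e_2\pm e_k$ (or $e_1+2e_2\pm e_k$), where the doubled coordinate lies in the support of $w$, and (ii) $x=e_1+e_2\pm 2e_k$ with $k\ge 3$, where it does not. Your pair $2e_1+e_2$, $e_1+e_2+e_3$ disposes of (i), but in case (ii) the word $2e_1+e_2$ does not lie in $I(x)$ at all (for $x=e_1+e_2+2e_3$ one has $d(2e_1+e_2,x)=3$, whereas membership in $I(x)$ would force this distance to be $1$), and $w$ has only \emph{one} weight-three neighbour in $I(x)$, namely $e_1+e_2+e_3$. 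So Lemma \ref{l_a21-2} gives no contradiction and configuration (ii) survives your argument entirely. This is precisely the configuration the paper labels $y=(2110\ldots0)$ with $x=(0110\ldots 0)$, and eliminating it is the bulk of the paper's proof: it first shows the $C_2$-word can only sit in the position not meeting the doubled coordinate, then splits on $c_1$ --- for $c_1=1$ the other two type-$1100$ words of $I(y)$ are forced into $C_2$ while having four $C_1$-neighbours in $I(y)$, contradicting $c_2=3$; for $c_1=2$ those words are forced into $C$, making $e_1$ adjacent to three codewords, contradicting $c_1=2$. Your uniqueness step (forcing $z$ to have type $2100$ and taking its unique type-$3100$ neighbour) presupposes the full type classification, so it inherits the gap: until case (ii) is excluded, a type-$2110$ codeword adjacent to a type-$1110$ word $z$ remains possible, and your statement that the classification step is ``routine'' is exactly what fails.
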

\begin{proof}
1. The statement is obvious for the case $c_1=1$ because by Remark 2.1 the minimum distance of $C$ is four and the distance between any distinct words in any cap is two. 

Let $c_1$ be $2$. Suppose the opposite, up to an automorphism of $G_n$, we have two cases.

Case A. Let the words $(3100\ldots 0)$
and $(3\text{-}10\ldots 0)$ be in $C$. Therefore,
 the words $(2100\ldots 0)$, $(3000\ldots 0)$ and $(2\text{-}100\ldots 0)$ are in $C_1$ because $C$ is a $1$-null code.

We see that the word $(200\ldots0)$, which is in $C$ or in $C_2$ due to Remark 2.2, has four neighbors from $C_1$, namely $(100\ldots0)$, $(2100\ldots 0)$, $(3000\ldots 0)$ and $(2\text{-}100\ldots 0)$. Therefore, since $c_2=3$, the word $(200\ldots0)$ having four neighbors in $C_1$ cannot be in $C_2$ and it is in $C$. However, the word $(3000\ldots 0)\in C_1$ is adjacent to three words of $C$, $(310\ldots0)$, $(3\text{-}10\ldots0)$ and $(200\ldots0)$, which contradicts $c_1= 2$. 

$\begin{tikzpicture}[scale=2]

\node (vw) at (-0.6 - 1.5,2) [circle, draw, fill=white, inner sep=3pt] {};

\node (vy) at (-0.6 - 1.5,1) [circle, draw, fill=yellow, inner sep=3pt] {};

\node (vr) at (-0.6 - 1.5,1.5) [circle, draw, fill=red, inner sep=3pt] {};
\node at (-0.4 - 1.5, -1.75) {Fig. 2.\textbf{ The contradiction for Lemma 2.1, Case A. }};
\node (v310) at (-5 - 1.5,2) [circle, draw, fill=white, inner sep=3pt] {}; 
\node (v3-10) at (-1.7 - 1.5,2) [circle, draw, fill=white, inner sep=3pt] {}; 

\node (v300) at (-5 - 1.5,1) [circle, draw, fill=red, inner sep=3pt] {}; %

\node (v210) at (-3 - 1.5,1) [circle, draw, fill=yellow, inner sep=3pt] {}; %

\node (v2-10) at (-3.9 - 1.5,1) [circle, draw, fill=yellow, inner sep=3pt] {}; %

\node (v200) at (-3 - 1.5,0) [circle, draw, fill=white, inner sep=3pt] {}; 

\node (v100) at (-3 - 1.5,-1) [circle, draw, fill=white, inner sep=3pt] {}; 

%

\node[right] at (vw) {\mbox{ } vertices of C};

\node[right] at (vy) {\mbox{ } the remaining vertices of $C_1$};

\node[right] at (vr) {\mbox{ }  vertex of $C_1$  contradicting $c_1\leq 2$};

\node[below] at (v310) {$3100$};
\node[left] at (v3-10) {$3\text{-}100$};
\node[below] at (v300) {$3000$};
\node[right] at (v210) {$2100$};
\node[above] at (v2-10) {$2\text{-}100$};
\node[below] at (v200) {$2000$};
\node[below] at (v100) {$1000$};

\draw (v310) -- (v300);
\draw (v310) -- (v210);
\draw (v3-10) -- (v2-10);
\draw (v3-10) -- (v300);
\draw (v2-10) -- (v200);
\draw (v210) -- (v200);
\draw (v300) -- (v200);
\draw (v200) -- (v100);

\end{tikzpicture}$

Case B. Let the words $(3100\ldots 0)$
and $(301\ldots 0)$ be in $C$. Similarly to case A, we see that the word $(200\ldots 0)$ is adjacent to $(100\ldots0)$, $(2100\ldots 0)$, $(3000\ldots 0)$ and $(2010\ldots 0)$ and therefore it is in $C$ , which implies the same contradiction for $(3000\ldots 0)\in C_1$ as in  Case A.

2. Consider a word  $x$ of type $1100$ in $C_2$. Since $c_2=3$, by Lemma \ref{l_interval} the word $x$ is at distance two from exactly one weight four word $y$ in $C$. By examining  the interval graphs on Fig. 1, we see that $y$ is of type $3100$ or $2110$. Indeed, any type $1100$ word in the other interval graphs is adjacent to two words of weight three, which   are in $C_1$ by Remark 2.2, providing contradiction to Lemma \ref{l_a21-2}. Let $y$ be of type $2110$, without restricting the generality, 
it is  $(2110\ldots0)$ and we seek a  contradiction. Consider the interval graphs of type $2110$ word $y$ in Fig. 3. We see that  $x\in C_2$ is  already adjacent to three words from $C_1$ in the interval graph of $y$ and $(1100\ldots 0)$ and $(1010\ldots 0)$ are adjacent to four words from $C_1$ in $I(y)$. We obtain that $x$ can only be $(0110\ldots 0)$.

Case $c_1=1$. Then the minimum distance of $C$ is four. We immediately obtain a contradiction  since two more type $1100$ words: $(110\ldots 0)$, $(101\ldots 0)$  are in the interval graph of $y$ and they belong to $C_{2}$, contradicting $c_2=3$, see Fig. 3.

When $c_1$ is $2$ by Remark 2.2 there are weight two words from $C$  and we see that both $(1100\ldots 0)$ and $(1010\ldots 0)$ must be in $C$ (otherwise we meet the same contradiction as in case $c_1=1$).  
However, in this case,  $(1000\ldots0)\in C_1$ is adjacent to three words in $C$ and we obtain a contradiction with $c_1=2$ (see Fig. 4).

\end{proof}
 \begin{theorem}\label{th_1} A $2$-null CRC in $G_n$, $n\geq 1$ with $\rho\geq 2$, $c_1=1$ and $c_2=3$ exists if and only if it is obtained by Construction \ref{Constr_halved_pc} for $n=2$. There are no $2$-null CRCs in $G_n$ with $c_1=2$, $c_2=3$  for any $n$. 
\end{theorem}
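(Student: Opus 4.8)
\emph{Overview and the ``if'' direction.} The plan is to verify the easy direction and then reduce both assertions to a single dimension count. For ``if'', reading off the matrix of Construction~\ref{Constr_halved_pc} shows that for $n=2$ it has $a_0=a_1=0$, $c_1=1$ and $c_2=2n-1=3$, so it is a $2$-null CRC of the required type; conversely $c_2=3$ forces $n=2$ in that construction. For the remaining claims the engine is an injection from the type $1100$ words lying in $C_2$ to the type $3100$ words lying in $C$. By Lemma~\ref{l_main} every type $1100$ word $x\in C_2$ is at distance two from a unique weight four word $y\in C$, which is of type $3100$; conversely each type $3100$ word $y=3\alpha e_i+\beta e_j$ contains exactly one type $1100$ word, namely $\alpha e_i+\beta e_j$, in its interval graph (cf.\ Fig.~1), so $x\mapsto y$ is injective. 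Its image consists of type $3100$ codewords, which split into the $2n$ caps, each meeting $C$ in at most one word by Lemma~\ref{l_main}. Hence the number of type $1100$ words in $C_2$ is at most $2n$, and since the degree of $G_n$ is $2n$ the hypothesis $c_2=3$ already forces $n\ge 2$.

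\emph{The dimension count.} For $c_1=1$ the minimum distance is four (Remark~2.1), so no weight one, two or three word is a codeword; consequently every type $1100$ word is non-adjacent to $C$ and lies in $C_2$. There are $2n(n-1)$ of them, whence $2n(n-1)\le 2n$, i.e.\ $n\le2$, so $n=2$. For $c_1=2$ I would count incidences between the $2n$ weight one words and their weight two codeword-neighbours: each weight one word has exactly one such neighbour, each type $2000$ codeword accounts for one incidence and each type $1100$ codeword for two, giving $t+2m=2n$, where $m$ is the number of type $1100$ codewords. Thus $m\le n$, so at least $2n(n-1)-n=n(2n-3)$ type $1100$ words remain in $C_2$, and $n(2n-3)\le 2n$ again yields $n\le 2$, hence $n=2$.

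\emph{The case $c_1=1$, $n=2$.} Now the count is an equality: the four type $1100$ words lie in $C_2$ and are matched bijectively to four type $3100$ codewords, one per cap. Solving this matching shows they form one of the two ``pinwheels'' $S=\{(3,1),(-1,3),(-3,-1),(1,-3)\}$ or its mirror image, and in particular $S=-S$. Since both the minimum-distance-four argument and the injection are translation invariant (apply Lemmas~\ref{l_interval} and~\ref{l_main} to $C-v$), every codeword $v$ has exactly the four type $3100$ codewords $v+S$ as nearest neighbours; the overlap $\mathbf 0=(3,1)+(-3,-1)$ together with $S=-S$ forces the same orientation at $(3,1)$, and by induction $L:=\langle(3,1),(1,-3)\rangle\subseteq C$. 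This sublattice of index $10$ is exactly the even part of the Golomb--Welsh perfect code, and a direct computation gives it covering radius three; as $C$ has minimum distance four, no codeword can lie within distance three of $L\subseteq C$, so $C=L$ and $C$ arises from Construction~\ref{Constr_halved_pc}.

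\emph{The case $c_1=2$ and the main obstacle.} For $c_1=2$, $n=2$ we have $t+2m=4$, so $C$ must contain a weight two word, and I would conclude by a finite case analysis over $(t,m)\in\{(4,0),(2,1),(0,2)\}$. The mechanism is that a weight two codeword has all its neighbours in $C_1$ (since $a_0=0$, $b_0=2n$), forcing weight three words into $C_1$; for example, when $2e_i,2e_j\in C$ the word $e_i+e_j\in C_2$ acquires \emph{both} of its weight three neighbours in $C_1$, contradicting Lemma~\ref{l_a21-2}, which permits only $c_2-2=1$. The remaining subcases are dispatched with the pinwheel structure, each producing a $C_2$ word with four neighbours in $C_1$, contradicting $c_2=3$. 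I expect the most delicate point to be the $c_1=1$ endgame: not the existence of the local pinwheel, which is immediate from the count, but the bookkeeping that promotes it to a global lattice with consistent orientation and that excludes stray weight four codewords of types $4000$, $2200$ and $2110$ --- this is precisely where the covering-radius computation for $L$ does the essential work.
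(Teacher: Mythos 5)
Your proposal is correct, and its first half coincides with the paper's argument: the injection from type $1100$ words in $C_2$ to type $3100$ codewords via Lemma~\ref{l_main}, the cap bound of $2n$ on type $3100$ codewords, the count of $4\binom{n}{2}$ type $1100$ words, the double-counting bound (at most $n$ of them in $C$ when $c_1=2$), and the resulting inequality $n(2n-3)\le 2n$ forcing $n\le 2$ are exactly the paper's steps. Where you genuinely diverge is the endgame at $n=2$: the paper simply invokes the classification of distance regular colorings of $G_2$ from \cite{ASV12}, which at once rules out $c_1=2$, $c_2=3$ and shows the unique admissible parameter matrix for $c_1=1$, $c_2=3$ belongs to the codes of Construction~\ref{Constr_halved_pc}; you instead re-prove this fragment of the classification by hand. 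For $c_1=1$ you extract the pinwheel $S=\{(3,1),(-1,3),(-3,-1),(1,-3)\}$ (or its mirror) from the now-tight matching, propagate its orientation using $S=-S$ together with the disjointness of the two pinwheel orientations and Lemma~\ref{l_main}.1 (so that a codeword in a cap must be the matched one), obtaining the lattice $L=\langle(3,1),(-1,3)\rangle\subseteq C$, and close with the squeeze: $L$ has covering radius $3$ while $C$ has minimum distance $4$, hence $C=L$, which is the even part of a Golomb--Welch perfect code. For $c_1=2$ you run the finite case analysis over $(t,m)\in\{(4,0),(2,1),(0,2)\}$; I checked that all three subcases close as you describe, each producing a word of $C_2$ with four neighbours in $C_1$ (in the $(0,2)$ case one indeed needs Lemma~\ref{l_main}.2 to place a type $3100$ codeword first, exactly as you indicate). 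What your route buys is self-containedness and a direct structural description of the code itself, not merely of its parameter matrix; what it costs is the extra bookkeeping you flag --- the orientation-propagation induction and the covering radius computation for $L$ --- which the paper avoids entirely by resting that part of the theorem on the external classification in \cite{ASV12}.
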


\begin{tikzpicture}[scale=1.8]

\node (vw) at (-2.7 ,4) [circle, draw, fill=white, inner sep=3pt] {};

\node (vy) at (-2.7 ,3) [circle, draw, fill=yellow, inner sep=3pt] {};

\node (vr) at (-2.7 ,2) [circle, draw, fill=red, inner sep=3pt] {};

\node (vb) at (-2.7 ,1) [circle, draw, fill=blue, inner sep=3pt] {};

\node at (-3.5, -1) {Fig 3. \textbf{ The final contradiction in Lemma 4.2 for $c_{1}=1$.}};

\node[right] at (vw) {\mbox{ } vertices of C};

\node[right] at (vy) {\mbox{ }   vertices of $C_1$};

\node[right] at (vr) {\mbox{ }  vertices of $C_2$  contradicting $c_2=3$};

\node[right] at (vb) {\mbox{ } other vertices of C$_2$};

\node (v0000) at (-5.5,0) [circle, draw, fill=white, inner sep=3pt] {}; 
\node (v1000) at (-6.25,1) [circle, draw, fill=yellow, inner sep=3pt] {}; 
\node (v0100) at (-5.5,1) [circle, draw, fill=yellow, inner sep=3pt] {}; 
\node (v0010) at (-4.5,1) [circle, draw, fill=yellow, inner sep=3pt] {}; 
\node (v1100) at (-6.25,2) [circle, draw, fill=red, inner sep=3pt] {}; 
\node (v0110) at (-4.5,2) [circle, draw, fill=blue, inner sep=3pt] {}; 
\node (v1010) at (-5.25,2) [circle, draw, fill=red, inner sep=3pt] {}; 
\node (v2000) at (-7,2) [circle, draw, fill=blue, inner sep=3pt] {}; 
\node (v1110) at (-5.25,3) [circle, draw, fill=yellow, inner sep=3pt] {}; 
\node (v2010) at (-6,3) [circle, draw, fill=yellow, inner sep=3pt] {}; 
\node (v2100) at (-7,3) [circle, draw, fill=yellow, inner sep=3pt] {}; 
\node (v2110) at (-6,4) [circle, draw, fill=white, inner sep=3pt] {}; 

\node[below] at (v0000) {$0000$};
\node[left] at (v1000) {$1000$};
\node[below] at (v0100) {$0100$};
\node[right] at (v0010) {$\hspace{+1mm}0010$};
\node[below] at (v1100) {$1100$};
\node[below right] at (v0110) {$0110$};
\node[below] at (v1010) {$1010$};
\node[left] at (v2000) {$2000$};
\node[right] at (v2100) {$2100$};
\node[below] at (v2010) {$2010$};
\node[right] at (v1110) {$1110$};
\node[above] at (v2110) {$2110$};

\draw (v0000) -- (v1000);
\draw (v0000) -- (v0100);
\draw (v0000) -- (v0010);
\draw (v1000) -- (v1100);
\draw (v1000) -- (v1010);
\draw (v1000) -- (v2000);
\draw (v0100) -- (v1100);
\draw (v0100) -- (v0110);
\draw (v0010) -- (v0110);
\draw (v0010) -- (v1010);
\draw (v2000) -- (v2100);
\draw (v1100) -- (v1110);
\draw (v0110) -- (v1110);
\draw (v1010) -- (v1110);
\draw (v1010) -- (v2010);
\draw (v2010) -- (v2110);
\draw (v1100) -- (v2100);
\draw (v2000) -- (v2010);
\draw (v2100) -- (v2110);
\draw (v1110) -- (v2110);

\end{tikzpicture}

\begin{tikzpicture}[scale=2]

\node (vw) at (-2.7 ,4) [circle, draw, fill=white, inner sep=3pt] {};

\node (vy) at (-2.7 ,3) [circle, draw, fill=yellow, inner sep=3pt] {};

\node (vr) at (-2.7 ,2) [circle, draw, fill=red, inner sep=3pt] {};

\node (vb) at (-2.7 ,1) [circle, draw, fill=green, inner sep=3pt] {};

\node at (-3.5, -1) {Fig 4. \textbf{The final contradiction in Lemma 4.2 for $c_{2}=2$.}};

\node[right] at (vw) {\mbox{ } vertices of $C$};

\node[right] at (vy) {\mbox{ }   vertices of $C_1$};

\node[right] at (vr) {\mbox{ } vertex of $C_1$  contradicting $c_1=2$};

\node[right] at (vb) {\mbox{ } irrelevant vertices};

\node (v0000) at (-5.5,0) [circle, draw, fill=white, inner sep=3pt] {}; 
\node (v1000) at (-6.25,1) [circle, draw, fill=red, inner sep=3pt] {}; 
\node (v0100) at (-5.5,1) [circle, draw, fill=yellow, inner sep=3pt] {}; 
\node (v0010) at (-4.5,1) [circle, draw, fill=yellow, inner sep=3pt] {}; 
\node (v1100) at (-6.25,2) [circle, draw, fill=white, inner sep=3pt] {}; 
\node (v0110) at (-4.5,2) [circle, draw, fill=green, inner sep=3pt] {}; 
\node (v1010) at (-5.25,2) [circle, draw, fill=white, inner sep=3pt] {}; 
\node (v2000) at (-7,2) [circle, draw, fill=green, inner sep=3pt] {}; 
\node (v1110) at (-5.25,3) [circle, draw, fill=yellow, inner sep=3pt] {}; 
\node (v2010) at (-6,3) [circle, draw, fill=yellow, inner sep=3pt] {}; 
\node (v2100) at (-7,3) [circle, draw, fill=yellow, inner sep=3pt] {}; 
\node (v2110) at (-6,4) [circle, draw, fill=white, inner sep=3pt] {}; 

\node[below] at (v0000) {$0000$};
\node[left] at (v1000) {$1000$};
\node[below] at (v0100) {$0100$};
\node[right] at (v0010) {\hspace{+1mm}$0010$};
\node[below] at (v1100) {$1100$};
\node[below right] at (v0110) {$0110$};
\node[below] at (v1010) {$1010$};
\node[left] at (v2000) {$2000$};
\node[right] at (v2100) {$2100$};
\node[below] at (v2010) {$2010$};
\node[right] at (v1110) {$1110$};
\node[above] at (v2110) {$2110$};

\draw (v0000) -- (v1000);
\draw (v0000) -- (v0100);
\draw (v0000) -- (v0010);
\draw (v1000) -- (v1100);
\draw (v1000) -- (v1010);
\draw (v1000) -- (v2000);
\draw (v0100) -- (v1100);
\draw (v0100) -- (v0110);
\draw (v0010) -- (v0110);
\draw (v0010) -- (v1010);
\draw (v2000) -- (v2100);
\draw (v1100) -- (v1110);
\draw (v0110) -- (v1110);
\draw (v1010) -- (v1110);
\draw (v1010) -- (v2010);
\draw (v2010) -- (v2110);
\draw (v1100) -- (v2100);
\draw (v2000) -- (v2010);
\draw (v2100) -- (v2110);
\draw (v1110) -- (v2110);

\end{tikzpicture}

\begin{proof}  We show that there are at least $4(^n_2)-n$ type $1100$ words  in $C_2$. 

We have $4(^n_2)$ words of type $1100$ in $G_n$ and for $c_1=1$ all of them are in $C_2$. For $c_1=2$ let 
 $\beta$ of them be in $C$. By double counting of the edges between these words and the weight one words, we obtain that
$\beta\leq n$.
We see that there are at least $4(^n_2)-n$ type $1100$ words in $C_2$. By Lemma \ref{l_main}.2 the number of these words  
 is not more than the number of type $3100$ words in $C$. As by Lemma \ref{l_main}.1, there are not more than $2n$   type $3100$ words in $C$, 
 we obtain that 
 $4(^n_2)-n\leq 2n$, which holds only for $n\leq 2$. 
 
 The case $n=1$ for  $c_2=3$  is not possible. All parameter matrices of CRCs in $G_2$ were found in \cite{ASV12} and there are no codes with $a_{0}=a_{1}=0$ and $c_1=2$, $c_2=3$ and only one parameter matrix with $a_{0}=a_{1}=0$ and $c_1=1$, $c_2=3$, namely $$\left(%
\begin{array}{cccc} 0& 4 & 0 & 0 \\
1& 0 & 3  & 0 \\
0&3& 0 & 1  \\
0& 0&4 & 0 \\\end{array}%
\right),$$ and the respective CRCs are obtained by Construction \ref{Constr_halved_pc}.

\end{proof}

\begin{theorem}
 The only $3$-null CRCs in $G_n$  $n\geq 1$ with $\rho\geq 2$ and  $c_2\leq 3$   to exist are:

1. CRCs in $G_1$ with any $\rho\geq 2$, $a_{0}=a_{1}=0$, $c_{i}=b_{i}=1$, $i=1\ldots,\rho-1$, $a_{\rho}=0$ and 
$\rho\geq 3$, $a_{0}=a_{1}=0$, $c_{i}=b_{i}=1$, $i=1\ldots,\rho-1$, $a_{\rho}=1$.

2. The CRCs obtained from these  codes in $G_2$ with $c_1=c_2=2$ and in $G_3$ with $c_1=c_2=3$ by Construction \ref{Constr_ZMult}. 
 
 3. CRCs with $c_1=1$, $c_2=2$  obtained  by Construction \ref{Constr_H2n2}.
 
4. CRCs with  $c_1=1$, $c_2=3$  obtained by Construction \ref{Constr_halved_pc} for $n=2$.

\end{theorem}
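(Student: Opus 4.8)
The plan is to organize the argument by the value of the pair $(c_1,c_2)$, which by Theorem \ref{T_AV_1} satisfies $1\le c_1\le c_2\le 3$; hence only the six pairs $(1,1)$, $(1,2)$, $(1,3)$, $(2,2)$, $(2,3)$, $(3,3)$ can occur. Since a $3$-null CRC has $a_0=a_1=0$ and is therefore $2$-null, the three off-diagonal pairs are settled at once by Section 5: the pair $(1,2)$ falls under Corollary \ref{coro_1}, giving item 3; the pair $(1,3)$ falls under Theorem \ref{th_1}, giving item 4; and the pair $(2,3)$ is ruled out by the second assertion of Theorem \ref{th_1}. What remains, and carries the real content, are the diagonal cases $c_1=c_2\in\{1,2,3\}$.

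In the diagonal cases the one-dimensional situation $n=1$ is itself a code in $G_1$, which I classify below, so suppose $n\ge 2$. Here I would use the degree identity $c_i+a_i+b_i=2n$, valid at every interior index $1\le i\le\rho-1$. Assume first $\rho\ge 3$, so that both $i=1$ and $i=2$ are interior. Being $3$-null gives $a_1=a_2=0$, and with $c_1=c_2$ the identity forces $b_1=b_2$; thus the triples $(c_1,a_1,b_1)$ and $(c_2,a_2,b_2)$ coincide and Theorem \ref{T_AV_2} applies with $(i,j)=(1,2)$. Consequently the code arises by Construction \ref{Constr_ZMult} from a CRC $D'$ in $G_1$, and its parameter matrix equals $nA'$, where $A'$ is the matrix of $D'$; reading off the diagonal, $a'_0=a'_1=a'_2=0$, so $D'$ is itself a $3$-null CRC in $G_1$.

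It remains to classify the $3$-null CRCs in $G_1$ with $\rho\ge 2$, which applies both to $n=1$ directly and to the code $D'$ above. In $G_1$ the degree is $2$, so $a_0=0$ forces $b_0=2$; since $a_1=0$ and $c_1\ge 1$ we get $b_1=2-c_1\le 1$, while $b_1\ge 1$ as $C_2$ must be reachable, whence $b_1=1$. The monotonicity $b_1\ge b_2\ge\cdots\ge b_{\rho-1}\ge 1$ from Theorem \ref{T_AV_1} then forces $b_i=1$, and the degree identity gives $c_i=1$, $a_i=0$, for every interior $i$. At the last layer $c_\rho+a_\rho=2$ with $c_\rho\ge c_{\rho-1}=1$, so $a_\rho\in\{0,1\}$, producing precisely the two families of item 1, realized by $2\rho{\bf Z}$ and $(2\rho+1){\bf Z}$; the $3$-null constraint $a_2=0$ kills the family with $a_\rho=1$ exactly when $\rho=2$, which is why it requires $\rho\ge 3$. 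Feeding this back into the previous step, $D'$ has $c'_1=1$, so $c_1=n\,c'_1=n$, and $c_1=c_2\le 3$ forces $n\le 3$: the values $n=2,3$ reproduce item 2 (with $c_1=c_2=2$ in $G_2$ and $c_1=c_2=3$ in $G_3$), while $n=1$ is item 1. The remaining subcase $\rho=2$, $n\ge 2$ is dispatched directly: a $3$-null CRC of covering radius $2$ is all-nulls, so $c_2=c_\rho=2n$ by the last-layer identity, contradicting $c_2\le 3$.

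The main obstacle is the passage, in the diagonal case, from the purely local coincidence $c_1=c_2$ to the global rigidity of the entire parameter matrix; this is exactly what the degree identity together with Theorem \ref{T_AV_2} provides, reducing everything to the explicit one-dimensional classification. The only delicate bookkeeping is at the boundary layers: one must handle $\rho=2$ by hand, since Theorem \ref{T_AV_2} needs two distinct interior indices, and must track how the $3$-null condition $a_2=0$ interacts with the free last-layer value $a_\rho$ to separate the two $G_1$ families. Everything else reduces to the routine verification that the listed constructions realize the extracted parameters.
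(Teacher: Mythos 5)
Your proof is correct and follows essentially the same route as the paper's: the off-diagonal pairs $(c_1,c_2)$ are dispatched by Corollary \ref{coro_1} and Theorem \ref{th_1}, the diagonal case $c_1=c_2$ with $\rho\geq 3$ is reduced via $b_1=b_2$ and Theorem \ref{T_AV_2} to Construction \ref{Constr_ZMult} over a CRC in $G_1$, and the case $\rho=2$ is eliminated by the last-row degree identity. The only difference is cosmetic: you derive the classification of $3$-null CRCs in $G_1$ from first principles (degree identity plus the monotonicity of Theorem \ref{T_AV_1}), whereas the paper cites \cite{ASV12} for that classification.
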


\begin{proof}
The parameters of CRCs in $G_1$ were obtained in \cite{ASV12} and the only $3$-null codes in $G_1$ are series with parameters as in the first and second statement of Theorem.

   By Theorem \ref{T_AV_1}, the sequence of elements below the main diagonal of parameter matrix is  not decreasing.   
    If $c_1=c_2$ and $\rho\geq 3$ then because $a_{1}=a_{2}=0$, we have $b_1=b_2$  and then  due to Theorem \ref{T_AV_2} this implies that $C$ in $G_n$ is obtained from  Construction \ref{Constr_ZMult}  applied to a CRC in $G_1$, where $n$ is a divisor of all entries of parameter matrix.     For $c_1=c_2=2$ or $3$ this implies that $n$ is $2$ and $3$ respectively.   
    If $c_1=c_2$ and $\rho=2$ then, since $a_{1}=a_{2}=0$ we obtain that $c_1=c_2$ is equal to graph valency $2n$, which cannot occur.

The case $c_1<c_2$ follows from Corollary \ref{coro_1} and Theorem \ref{th_1}. This case  corresponds to the third and fourth statements of Theorem.  
    
\end{proof}

\section{$1$-null CRCs with  $a_{1}=1$}

\begin{theorem}\label{T_1null}
Any CRC in $G_n$, $n\geq 1$  with $\rho\geq 1$, $a_{0}=0$ and $a_{1}=1$ has $c_1=1$ and is obtained by Construction \ref{Constr_Hn3}.   
\end{theorem}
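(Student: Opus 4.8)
The plan is to exploit the two constraints $a_0=0$ and $a_1=1$ to pin down the coloring in the immediate neighbourhood of \emph{every} codeword, and then to propagate this local picture under the translations $x\mapsto x+3e_i$ to obtain periodicity, from which Construction \ref{Constr_Hn3} follows via the Remark after Construction \ref{Constr_Gnq}.

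First I would record the consequence of $a_0=0$: the code has minimum distance at least two, so every neighbour of a codeword lies in $C_1$; in particular, for ${\bf 0}\in C$ all words $\pm e_i$ are in $C_1$. The crucial step is a rigidity statement forced by $a_1=1$. Fix a codeword $x$ and two distinct directions $i\neq j$. The word $x+e_i+e_j$ is adjacent to both $x+e_i$ and $x+e_j$, and both of these lie in $C_1$ as neighbours of $x$; hence if $x+e_i+e_j$ were itself in $C_1$ it would have at least two neighbours in $C_1$, contradicting $a_1=1$. Thus no such ``off-axis'' word $x+e_i\pm e_j$ lies in $C_1$. Since $x+e_i\in C_1$ must have exactly one $C_1$-neighbour, and its neighbours other than $x$ are precisely $x+2e_i$ together with the off-axis words $x+e_i\pm e_j$, this unique $C_1$-neighbour is forced to be $x+2e_i$; in particular $x+2e_i\in C_1$.

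Next I would establish $c_1=1$. Every vertex of $C_1$ is adjacent to some codeword and hence equals $x\pm e_i$ for a codeword $x$, so it suffices to count the $C$-neighbours of such an $x+e_i$. The only candidates besides $x$ are the off-axis words $x+e_i\pm e_j$. If one of them, say $x+e_i+e_j$, were in $C$, then its neighbour $x+2e_i+e_j$ would lie in $C_1$ (it cannot be in $C$ by the minimum-distance bound), and being adjacent to $x+2e_i\in C_1$ it would give $x+2e_i$ a second $C_1$-neighbour besides $x+e_i$, again contradicting $a_1=1$. Hence $x$ is the only codeword adjacent to $x+e_i$, so $c_1=1$.

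Finally I would extract periodicity. Having $x+2e_i\in C_1$ and $c_1=1$, the word $x+2e_i$ has a unique $C$-neighbour; its neighbours are $x+e_i\in C_1$, the word $x+3e_i$, and the words $x+2e_i\pm e_j$. The last of these cannot be in $C$, for $x+2e_i+e_j\in C$ would force $x+e_i+e_j\in C_1$, which is excluded by the rigidity step. Therefore the unique $C$-neighbour of $x+2e_i$ is $x+3e_i$, so $x+3e_i\in C$. Running the same argument in the $-e_i$ direction yields $x-3e_i\in C$, so $C$ is invariant under every translation by $3e_i$; that is, $C$ has period three in each direction, and by the Remark following Construction \ref{Constr_Gnq} it arises from a CRC in $H(n,3)=G_{n,3}$ via Construction \ref{Constr_Hn3}. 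The point that must be handled carefully is that the rigidity argument is stated for an \emph{arbitrary} codeword $x$ rather than only for ${\bf 0}$: it is exactly this uniformity that upgrades the local analysis into a global period-three statement, and the sign bookkeeping in the $\pm e_i$ directions is the only remaining routine point.
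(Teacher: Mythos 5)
Your proof is correct. Its core --- showing that no word $x+e_i\pm e_j$ (type $1100$ relative to a codeword $x$) can lie in $C_1$, deducing $x+2e_i\in C_1$ and then $x+3e_i\in C$, and invoking the Remark after Construction \ref{Constr_Gnq} to conclude that $C$ comes from $H(n,3)$ via Construction \ref{Constr_Hn3} --- coincides with the paper's argument, which states it for ${\bf 0}\in C$ and extends it by translation. Where you genuinely diverge is the claim $c_1=1$: you prove it directly in $G_n$, \emph{before} periodicity, by noting that a codeword at $x+e_i+e_j$ would force $x+2e_i+e_j$ into $C_1$ and hence give $x+2e_i$ a second $C_1$-neighbour, violating $a_1=1$. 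The paper instead establishes periodicity first, passes to the quotient code in $H(n,3)$, and derives $c_1=1$ there from the decomposition of the radius-one ball of $H(n,3)$ into $n$ triangles sharing the center: $a_1=1$ permits an extra $C_1$-vertex in only one triangle $K$, $a_0=0$ forces the remaining triangles to consist of $C_2$-words, so the center meets $C$ in exactly one vertex, inside $K$. Your route keeps the whole argument inside $G_n$ and uses no structure of the quotient graph; the paper's route gets $c_1=1$ essentially for free once the covering is in hand. Both orderings are legitimate, since your $c_1=1$ step uses only the rigidity statement, and your periodicity step needs only the existence (not uniqueness) of a $C$-neighbour of $x+2e_i$.
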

\begin{proof}
Since $a_0=0$, all words of weight one are in $C_1$, each of which has exactly one weight two neighbor in $C$ because $a_1=1$. Moreover, if there is a word of type $1100$ as such a neighbor,  then $a_{1}>1$, a contradiction. We conclude that all words of type $2000$ are also in $C_1$. By the same arguments, all words of type $3000$ are in $C$ and $C$ has  period three in any direction and therefore $C$ arizes from a  CRC in $H(n,3)$ by Construction \ref{Constr_Hn3}.

Consider the respective code  in $H(n,3)$. The graph induced by the ball of radius $1$ in $H(n,3)$ is a union of $n$ cliques of size $3$ any pair of which has only one common vertex, the center of the ball. In case when the center of the ball is in $C_1$, because $a_{1}=1$, only one of the cliques, say $K$, contains an extra vertex of $C_1$. The remaining cliques contain words of $C$ and $C_2$, but because  $a_{0}$ is $0$, actually only words of $C_2$. We are left with the case where there is only one vertex in the ball from $C$ in the clique $K$, thus $c_1=1$.  
\end{proof}
An obvious example of ternary CRC fulfilling  conditions of Theorem \ref{T_1null}  is  a singleton vertex code. 
Furthermore, by studying the database \cite[Tables 7.9-7.11]{KKM25} of completely regular codes in $H(n,3)$ we found that the only known CRCs that meet the conditions of Theorem \ref{T_1null} are the ternary Golay code and its related codes \cite{BRZ19}: punctured, shortened and extended codes. 

\section{All-ones CRC}
In this section we study all-ones CRCs. In this case, each connected component of $C_i, i=0,\ldots ,\rho$, consists of two adjacent words. 
Throughout this section we assume that the all-zero word ${\bf 0}$
and the word $-e_n$ are in $C$.

We say that the following set of words in the graph $G_n$ 
$$\{(x_1,\ldots,x_n) :
\ x_1,\ldots,x_{n-1}\in {\bf Z} , x_n\in\{2s,2s-1\}\}$$
is the $s$-slice. We call a collection of words $x$ and $x-e_n$ in $s$-slice a slice pair. An $n$-line is a set of all words in $G_n$ that differ only in the $n$-th symbol. We now show that any $n$-line in $G_n$ is a collection of slice pairs, whose words are at alternating distances from an all-ones CRC $C$, forming domino-like structure, see Fig. 5.

\begin{lemma} \label{all-one}
Let $C$ be an all-ones CRC in $G_n$, $n\geq 1$. The following holds. 

1. Two words in any slice 
pair are in $C_i$, for some $i\in \{0,\ldots,\rho\}$.

2. If an $n$-line contains words of $C_i$, then the words from  $C_i$ in the direction $n$ have period four and are split into slice pairs. In particular, any $n$-line consists of slice pairs from $C_i$, $C_{i+1}$, for some $i \in \{0,\ldots,\rho-1\}$. 

\end{lemma}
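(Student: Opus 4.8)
The plan is to read the all-ones hypothesis as a statement about a perfect matching and to reduce Part 1 to a coherence property of that matching, and then to deduce Part 2 from Part 1 by a short counting argument. Since $a_i=1$ for every $i$, the subgraph induced on each $C_i$ is $1$-regular, so the same-cell edges form a perfect matching $M$ of $G_n$: every vertex $v$ has a unique neighbour $v'$ (its \emph{partner}) lying in the same cell. Every vertex lies in exactly one slice pair, so Part 1 is equivalent to the assertion that $M$ is precisely the slice-pairing, i.e. the partner of $v$ is $v-e_n$ when $v_n$ is even and $v+e_n$ when $v_n$ is odd. The hypothesis ${\bf 0},-e_n\in C$ says exactly that the partner of ${\bf 0}$ is $-e_n$, so $M$ contains the slice edge at the origin; the whole force of Part 1 is that this single alignment must propagate to the entire matching.

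To prove Part 1 I would argue by induction on $\|v\|_1$, spreading outward from the origin. Because the partner of ${\bf 0}$ is $-e_n$, all its other neighbours lie in $C_1$; in particular $e_n$ and each $\pm e_j$ ($j<n$) lie in $C_1$. Feeding such already-known colours into the local balance at a newly reached vertex — each vertex of $C_i$ has exactly one partner, exactly $c_i$ neighbours in $C_{i-1}$ and $b_i$ in $C_{i+1}$ — pins down the colour of its last undetermined neighbour and hence identifies its partner, while the $4$-cycles spanned by $e_n$ and a perpendicular $e_j$ keep these local pictures consistent. The hard part will be exactly this propagation: a single $4$-cycle admits locally consistent colourings in which a partner points in a perpendicular direction, or in which the $n$-direction matching is shifted by one, so ruling these out cannot be purely local and must combine the origin anchor with the connectedness of $G_n$. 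This coherence step is the main obstacle of the lemma.

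Granting Part 1, Part 2 follows cleanly. Fix an $n$-line $L$, write $w_m$ for its vertex with $n$-coordinate $m$, and let $g(s)$ denote the common cell of the slice pair $\{w_{2s-1},w_{2s}\}$. The edge $\{w_{2s},w_{2s+1}\}$ joins two distinct slice pairs, so it is not a matching edge; hence $w_{2s}$ and $w_{2s+1}$ lie in different cells, and being adjacent they satisfy $g(s+1)=g(s)\pm 1$. To upgrade this to period two I would compare the two vertices of a pair by stacking: for each $j<n$ and each sign, the pair $\{w_{2s}\pm e_j,\ w_{2s-1}\pm e_j\}$ has $n$-coordinates $2s$ and $2s-1$, so it is itself a slice pair and is monochromatic by Part 1. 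Thus $w_{2s}$ and $w_{2s-1}$ have identical distributions of their $2(n-1)$ perpendicular neighbours among the cells. Writing $i=g(s)$ and comparing, for each of $w_{2s}$ and $w_{2s-1}$, the number of neighbours in one adjacent cell (given by the equitable parameter $c_i$ or $b_i$), the equal perpendicular contributions cancel, so the outer $n$-neighbours $w_{2s+1}$ and $w_{2s-2}$ lie in the same cell; since each of $g(s+1),g(s-1)$ equals $i-1$ or $i+1$, this forces $g(s-1)=g(s+1)$. Therefore $g$ is $2$-periodic: $L$ meets only the two consecutive cells $C_i$ and $C_{i+1}$, the pairs alternate between them in domino fashion, and the slice pairs of each cell recur with period four in the $n$-coordinate, which is Part 2.
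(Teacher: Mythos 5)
Your Part 2 is correct and is essentially the paper's own argument: by Part 1 the two words of a slice pair have identical cell-distributions among their $2(n-1)$ in-slice neighbours, so the equitable-partition counts force their two outer $n$-neighbours into the same cell, giving $g(s-1)=g(s+1)$ and hence the alternation and period four. The problem is Part 1: you set up the matching reformulation and the origin anchor, but then explicitly leave the propagation step open (``this coherence step is the main obstacle of the lemma''). That step \emph{is} the content of the lemma, so as written the proof is incomplete, and Part 2 rests on it.

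Moreover, your diagnosis that the propagation ``cannot be purely local'' is mistaken; the paper closes the gap with two short local arguments whose common engine is the pair of facts that (i) partners are unique ($a_i=1$ for every $i$), and (ii) there are no edges between $C_{i-1}$ and $C_{i+1}$. \emph{In-slice coherence:} if $x,x-e_n$ is a monochromatic pair in $C_i$, then for any $j<n$ and $a=\pm1$ neither $x+ae_j$ nor $x+ae_j-e_n$ can lie in $C_i$ (the unique $C_i$-neighbours of $x$ and $x-e_n$ are already taken), so each lies in $C_{i-1}\cup C_{i+1}$; being adjacent to each other, they cannot be split between $C_{i-1}$ and $C_{i+1}$, hence they form a monochromatic pair, and iterating over perpendicular steps splits the whole slice into monochromatic pairs. \emph{Slice advance:} with the same pair $x,x-e_n\in C_i$, suppose the partner of $x+e_n\in C_k$ ($k=i\pm1$) were some $x+e_n+ae_j$; the word $x+ae_j$ lies in $C_{i-1}\cup C_{i+1}$, and either case is absurd: if $x+ae_j\in C_k$, then $x+e_n+ae_j$ has two neighbours in $C_k$, contradicting $a_k=1$; if $x+ae_j$ lies in the other of the two cells, then $x+e_n+ae_j\in C_k$ is adjacent to a word whose distance from $C$ differs by two, which is impossible. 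Hence the partner of $x+e_n$ is $x+2e_n$, so the next slice acquires a monochromatic pair, and induction over slices, anchored at ${\bf 0},-e_n$ exactly as in your setup, proves Part 1. These two local steps are precisely what your sketch is missing.
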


\begin{proof}

Firstly, we note that the existence of a slice pair in $C_i$ for some $i\in\{0,\ldots,\rho\}$  implies that the words in the slice are split into slice pairs at the same distance from $C$.
Indeed, let $x$ and $x-e_n$ be a slice pair in the $s$-slice for some $s$ in ${\bf Z}$, both from $C_i$, for some $i\in\{0,\ldots, \rho\}$. Consider the cycles of length four containing the slice pair:   
$x$, $x-e_n$, $x+ae_j$, $x+ae_j-e_n$, $a\in \{-1, 1\}$, $j\in\{1,\ldots,n-1\}$, where all words are in the $s$-slice. The words $x+ae_j$, $x+ae_j-e_n$ are not in $C_i$ because $x$ and $x-e_n$ are in $C_i$ and $C$ is an all-ones CRC. Then they are both in $C_{i-1}$ or in $C_{i+1}$. Extending this argument for  $x+ae_j$, $x+ae_j-e_n$ instead of $x$ and $x-e_n$ we obtain the required.

\begin{tikzpicture}[scale=2] 


\node at (6, -3) {Fig 5. \textbf{Domino property for all-ones CRC, Lemma 4.}  };
\begin{scope}[rotate around={0:(0,0)}]

\node at (10, 1) {\textbf{Vertices in $C$}  };

\node  at (8.7,1) [circle, draw, fill=white, inner sep=3pt] {};

\node at (10, 0) {\textbf{Vertices in $C_1$}  };

\node  at (8.7,0) [circle, draw, fill=yellow, inner sep=3pt] {};

\node at (6.5, -1) {\textbf{All orange vertices are   either in $C$ or in $C_2$ simultaneously}};

\node  at (2,-1) [circle, draw, fill=orange, inner sep=3pt] {};

\node at (6.5, -2) {\textbf{All black vertices are either in $C$ or in $C_2$ simultaneously}};

\node  at (2,-2) [circle, draw, fill=black, inner sep=3pt] {};



\node (v0002) at (5,2.5) [circle, draw, fill=yellow, inner sep=3pt] {}; 
\node (v0001) at (5,2) [circle, draw, fill=yellow, inner sep=3pt] {}; 
Distance 1
\node (v0000) at (5,1.5) [circle, draw, fill=white, inner sep=3pt] {}; 
\node (v000_1) at (5,1) [circle, draw, fill=white, inner sep=3pt] {}; 
\node (v000_2) at (5,0.5) [circle, draw, fill=yellow, inner sep=3pt] {};
\node (v000_3) at (5,0) [circle, draw, fill=yellow, inner sep=3pt] {};

\node (v0102) at (6.4,2.8) [circle, draw, fill=black, inner sep=3pt] {}; 
\node (v0101) at (6.4,2.3) [circle, draw, fill=black, inner sep=3pt] {}; 
Distance 1
\node (v0100) at (6.4,1.8) [circle, draw, fill=yellow, inner sep=3pt] {}; 
\node (v010_1) at (6.4,1.3) [circle, draw, fill=yellow, inner sep=3pt] {}; 
\node (v010_2) at (6.4,0.8) [circle, draw, fill=black, inner sep=3pt] {};
\node (v010_3) at (6.4,0.3) [circle, draw, fill=black, inner sep=3pt] {};

\node (v1002) at (3.5,2.5) [circle, draw, fill=orange, inner sep=3pt] {}; 
\node (v1001) at (3.5,2) [circle, draw, fill=orange, inner sep=3pt] {}; 
Distance 1
\node (v1000) at (3.5,1.5) [circle, draw, fill=yellow, inner sep=3pt] {}; 
\node (v100_1) at (3.5,1) [circle, draw, fill=yellow, inner sep=3pt] {}; 
\node (v100_2) at (3.5,0.5) [circle, draw, fill=orange, inner sep=3pt] {};
\node (v100_3) at (3.5,0) [circle, draw, fill=orange, inner sep=3pt] {};



\node[above left] at (v000_3) {$000{\text -}3$\,\,};
\node[above left] at (v000_2) {$000{\text - }2$\,\,};
\node[above left] at (v000_1) {$000{\text -}1$\,\,};
\node[above   left] at (v0000) {$0000$\,\,};
\node[above  left] at (v0001) {$0001$\,\,};
\node[above  left] at (v0002) {$0002$\,\,};

\node[above left] at (v100_3) {$100\text{-}3$\,\,};
\node[above left] at (v100_2) {$100\text{-}2$\,\,};
\node[above left] at (v100_1) {$100\text{-}1$\,\,};
\node[above   left] at (v1000) {$1000$\,\,};
\node[above  left] at (v1001) {$1001$\,\,};
\node[above  left] at (v1002) {$1002$\,\,};

\node[above right] at (v010_3) {$010{\text -}3$\,\,};
\node[above right] at (v010_2) {$010{\text -}2$\,\,};
\node[above right] at (v010_1) {$010{\text -}1$\,\,};
\node[above   right] at (v0100) {$0100$\,\,};
\node[above  right] at (v0101) {$0101$\,\,};
\node[above  right] at (v0102) {$0102$\,\,};

\node[above  right] at (7.5,2.5)  {$1$-slice\,\,};

\node[above  right] at (7.5,1.5)  {$0$-slice\,\,};

\node[above  right] at (7.5,0.5)  {${\text -}1$-slice\,\,};

\draw [dashed] (4.8,2.8)--(6.4,2.8);

\draw [dashed] (4.8,2.8)--(3.6,2.5);

\draw [dashed] (4.8,2.3)--(6.4,2.3);

\draw [dashed] (4.8,2.3)--(3.6,2.0);

\draw [dashed] (4.8,1.8)--(6.2,1.8);

\draw [dashed] (4.8,1.8)--(3.6,1.5);

\draw [dashed] (4.8,1.3)--(6.3,1.3);

\draw [dashed] (4.8,1.3)--(3.6,1);

\draw [dashed] (4.8,0.8)--(6.4,0.8);

\draw [dashed] (4.8,0.8)--(3.6,0.5);

\draw [dashed] (4.8,0.3)--(6.4,0.3);

\draw [dashed] (4.8,0.3)--(3.6,0);

\draw (v0000) -- (v000_1);
\draw (v0000) -- (v0100);
\draw (v0000) -- (v0001);

\draw (v0001) -- (v0002);
\draw (v0001) -- (v0101);
\draw (v0001) -- (v1001);

\draw (v1002) -- (v0002);
\draw (v0102) -- (v0002);

\draw (v000_1) -- (v100_1);
\draw (v000_1) -- (v010_1);

\draw (v0102) -- (v0002);

\draw (v0000) -- (v1000);

\draw (v000_2) -- (v100_2);
\draw (v000_2) -- (v010_2);
\draw (v000_2) -- (v000_3);

\draw (v000_2) -- (v000_1);

\draw (v000_3) -- (v100_3);
\draw (v000_3) -- (v010_3);
\draw [decorate,
    decoration = {brace}] (7.4,3) --  (7.4,2.3);
    \draw [decorate,
    decoration = {brace}] (7.4,2) --  (7.4,1.3);
    \draw [decorate,
    decoration = {brace}] (7.4,1) --  (7.4,0.3);
\end{scope}
\end{tikzpicture}

1. By convention, ${\bf 0}$
and $-e_n$ are in $C$, so by the above the $0$-slice is split in to slice pairs with both words at the same distance from $C$. 

Because CRC $C$  has the parameter  $a_0=1$ the following neighbors of ${\bf 0}$ and $-e_n$:  $ae_i, ae_i-e_n$, $i\in\{1,\ldots,n-1\}$, $a \in \{-1, 1 \}$ are in $C_1$. Furthermore, because $C$ has $a_1=1$, $e_n$
and $-2e_n$ from $C_1$ have unique neighbors in $C_1$. We show that they are   $2e_n$ and  $-3e_n$ respectively. Indeed, suppose the opposite and there is a neighbor of  $e_n$,   $e_n+ae_i$, $a\in \{-1,1\}$, $i\in \{1,\ldots, n-1\}$ which is in $C_1$. Then  $e_n+ae_i$   in $C_1$ is adjacent to two words in $C_1$, namely $e_n$ and $ae_i$, a contradiction with the parameter $a_1=1$ of CRC $C$. Therefore, the neighbor of $e_n$ in $C_1$ can only be $2e_n$. Similar argument for $-2e_n$ shows that $-3e_n$ is in $C_1$. By  the remark at the beginning of the proof we see that the words in any slice pairs in the $1$-slice and in $-1$-slice are at the same distance from $C$.

We extend this argument in  $s$-slices for all $s\in {\bf Z}$  to obtain the required.

2. Obviously, any word in a slice pair $x$ and $x-e_n$ from $C_i$ is adjacent to the same number of words from $C_{i-1}$ ($C_i$ and $C_{i+1}$ respectively) in the $s$-slice. By the first statement of Lemma,  
$x+e_n$, $x+2e_n$, $x-2e_n$ and $x-3e_n$,  are in $C_{i-1}$ or $C_{i+1}$ simultaneously. The same argument applied to   
$x+e_n$, $x+2e_n$ implies the period of four for $C_i$ in any $n$-line. 
\end{proof}

From Lemma \ref{all-one}.2 there are two code edges at distance 3 in any $n$-line with a code  word. We now compute the minimum distance between code slice pairs in any slice.
\begin{lemma}\label{l_distance_slice}
Let $C$ be all-ones CRC in $G_n$, $n\geq 2$ with $\rho\geq 1$ and $c_1=1$. Then the minimum distance between slice pairs from $C$ in the same slice is at least $4$.

\end{lemma}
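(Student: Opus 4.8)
The plan is to fix a reference code slice pair and measure the other against it. Applying a translation of $G_n$ (an automorphism that preserves slices as well as the hypotheses and the conclusion), I may assume the two code slice pairs in question are $\{{\bf 0},-e_n\}$, lying in the $0$-slice, and $\{y,\,y-e_n\}$ with $y_n=0$ and $y\neq{\bf 0}$. Since both pairs sit in the same slice, the distance between them equals $\mathrm{wt}(y)=\sum_{i=1}^{n-1}|y_i|$; hence it suffices to rule out $\mathrm{wt}(y)\in\{1,2,3\}$.

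First I would locate the low-weight words around ${\bf 0}$ in the partition. Because $a_0=1$ and the unique code neighbour of ${\bf 0}$ is $-e_n$, every weight-one word $\pm e_i$ with $1\le i\le n-1$, together with $e_n$, lies in $C_1$. By Lemma \ref{all-one}.1 each of these shares its class with its slice partner, so $\pm e_i-e_n\in C_1$ as well. I then claim that every weight-two word $u$ with $u_n=0$ lies in $C_2$. It is not in $C$: otherwise one of its weight-one neighbours, which is in $C_1$, would have two code neighbours, against $c_1=1$. It is not in $C_1$ either: a type $1100$ word $\pm e_i\pm e_j$ would have the two $C_1$ neighbours $\pm e_i,\pm e_j$, and a type $2000$ word $\pm 2e_i$ would give the vertex $\pm e_i$ a second $C_1$ neighbour besides its slice partner $\pm e_i-e_n$ --- both contradicting $a_1=1$. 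As $u$ is adjacent to a $C_1$ word, it must therefore lie in $C_2$.

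With this in hand the three forbidden distances fall quickly. If $\mathrm{wt}(y)=1$ then $y$ is a weight-one code word, contradicting $y\in C_1$. If $\mathrm{wt}(y)=2$ then $y$ is a weight-two code word, contradicting $y\in C_2$. If $\mathrm{wt}(y)=3$, I would reduce one nonzero coordinate of $y$ by one unit toward zero to obtain a weight-two word $u$ with $u_n=0$ adjacent to $y$; then $u\in C_2$ is adjacent to $y\in C_0$, which is impossible since the parameter matrix is tridiagonal and no edges join $C_0$ and $C_2$. Hence $\mathrm{wt}(y)\ge 4$, as required.

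The main obstacle is the weight-two classification, and specifically excluding $C_1$ for the type $2000$ words $\pm 2e_i$: unlike the type $1100$ case, this does not come from a single vertex obviously having two $C_1$ neighbours, and it genuinely relies on the slice-pair structure of Lemma \ref{all-one} to supply the partner $\pm e_i-e_n\in C_1$. It is worth noting in passing that the hypotheses force $\rho\ge 2$, so $C_2$ is nonempty: for $n\ge 2$ a covering radius $\rho=1$ would give $c_1=2n-1>1$. Beyond this, every step is a direct case check using only $a_0=a_1=1$, $c_1=1$, and the non-adjacency of $C_0$ and $C_2$.
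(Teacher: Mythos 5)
Your proof is correct, but it is organized differently from the paper's, which argues directly on geodesics, so a comparison is worth recording. The paper defines $d$ as the minimum distance between words of distinct code slice pairs in a common slice and disposes of the three small values at once: $d=1$ is impossible by Lemma \ref{all-one}.1, $d=2$ would give the middle vertex of a geodesic two neighbours in $C$ (so $c_1\geq 2$), and $d=3$ would give an interior geodesic vertex of $C_1$ two neighbours in $C_1$ --- the next geodesic vertex and its own slice partner --- so $a_1\geq 2$. You instead normalize by a translation and first prove a local classification: all weight-one words other than $-e_n$ lie in $C_1$, and all weight-two words with vanishing last coordinate lie in $C_2$; then distances $1$, $2$, $3$ each contradict a membership, the distance-$3$ case being settled by tridiagonality (no $C_0$--$C_2$ edges) rather than by $a_1\geq 2$. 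The ingredients are identical ($a_0=a_1=1$, $c_1=1$, and Lemma \ref{all-one}.1), and you invoke the slice-pair lemma at exactly the spot where the paper uses it implicitly, namely to produce a second $C_1$ neighbour when excluding type $2000$ words from $C_1$; your write-up has the merit of making that step explicit, since the paper's terse claim that $d=3$ forces $a_1\geq 2$ silently needs both the partner of the interior vertex and the fact that the minimizing pair of codewords can be taken with equal last coordinates. The trade-off: your route passes through $C_2$ and so needs $\rho\geq 2$, which you correctly observe is automatic ($\rho=1$ with an all-ones diagonal would force $c_1=2n-1>1$), whereas the paper's argument never mentions $C_2$; on the other hand, your weight-two classification is essentially Remark 3.1 combined with the opening claim in the proof of Lemma \ref{all-one-12} (that all type $2000$ words of the $0$-slice lie in $C_2$), so in your organization that fact is proved once and could be reused there.
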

\begin{proof}
Let $d$ be the minimum of the distances between $x$ and $y$, for all $x\subset A,y\subset B$ and all  distinct slice pairs $A$, $B$ from $C$ in the same slice.  By Lemma \ref{all-one}.1 we see that  $d\neq 1$.  Considering the shortest path between the closest $x$ and $y$ in $C$, we see that if $d=2$, then parameter $c_1\geq 2$ and if $d=3$ then $a_{1}\geq 2$, a contradiction.

 \end{proof}

\begin{remark} Let $C$ be an all-ones CRC in $G_n$ with $c_1=1$. 

1.  Note that no weight two word is in $C$. Otherwise, there is a weight one word adjacent to such a word and all-zero word. This  weight one word cannot be in $C$ or $C_1$ because $a_0=c_1=1$.

2. Consider the interval graph of weight four word $y$ from $C$. From the above, taking into account that  ${\bf 0}$ is in $C$ if type $1100$ word is in $C_2$, all its neighbors  in the interval graph of $y$ are from $C_1$.
\end{remark}
The lemmas below follow the narrative of Section 5.

\begin{lemma} \label{all-one-12} 
Any all-ones CRC in $G_n$, $n\geq 1$ with $\rho\geq 2$, $c_{1}=1$ and $c_2=2$ is obtained by Const\-ruction \ref{Constr_H2n2}.    
\end{lemma}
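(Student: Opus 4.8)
The plan is to prove that $C$ has period four in every coordinate direction and then to conclude, by the remark following Construction \ref{Constr_Gnq}, that $C$ is obtained by Construction \ref{Constr_H2n2}. Two pieces of structure come for free. First, by Lemma \ref{all-one}.2 the code already has period four in direction $n$, so only the directions $i<n$ remain. Second, every edge of $C$ lies in direction $n$: if $x\in C$ then its slice-pair partner lies in $C$ by Lemma \ref{all-one}.1 and, since $a_0=1$, it is the unique code neighbour of $x$; hence the local analysis below is translation-covariant and it suffices to run it once, at ${\bf 0}$ (whose domino partner is $-e_n$). I will also use two global facts, both immediate from $a_0=c_1=1$: no two code words are at distance two (a common neighbour would lie in $C_0\cup C_1$ yet have two code neighbours), and the interior vertices of any geodesic between two code words at distance three lie in $C_1$ (an interior vertex in $C$ would be at distance two from the other endpoint).

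First I would run the local cascade in a fixed direction $i<n$, exactly as in the proof of Corollary \ref{coro_1}. Since $a_0=1$ we have $e_i\in C_1$; its slice-pair partner $e_i-e_n$ also lies in $C_1$ and, by $a_1=1$, is its only neighbour in $C_1$, so $2e_i\notin C_1$; as $c_1=1$ forbids $2e_i\in C$, we obtain $2e_i\in C_2$ (a type $2000$ word in $C_2$). Now $c_2=2$ gives $2e_i$ a second neighbour $z$ in $C_1$, necessarily of weight three since $e_i$ is its only weight-one neighbour. Let $y$ be the unique code neighbour of $z$ (unique by $c_1=1$). Being adjacent to the weight-three word $z$, the word $y$ has weight two or four; weight two is excluded by the distance-two fact, so $y$ has weight four, and from $d({\bf 0},y)=4=d({\bf 0},2e_i)+d(2e_i,y)$ both $2e_i$ and $z$ lie in the interval graph $I(y)$.

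The crux is to show that $y$ has type $4000$, for then $y=4e_i$ and $4e_i\in C$. Suppose $y$ has another type; inspecting Fig. 1 (the admissible types here are $3100$, $2200$, $2110$, as $y_i\geq 2$ rules out $1111$), the interval graph $I(y)$ contains a type $1100$ word $w$ adjacent inside $I(y)$ to two weight-one words and to at least one weight-three word. The two weight-one words are neighbours of ${\bf 0}$, hence lie in $C_1$, so $w\notin C_1$ (this would violate $a_1=1$) and $w\notin C$ (distance-two fact); therefore $w\in C_2$. But then the weight-three neighbour of $w$ in $I(y)$, being adjacent to $w\in C_2$ and to $y\in C_0$, must lie in $C_1$; hence $w\in C_2$ has at least three neighbours in $C_1$, contradicting $c_2=2$. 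This forces $y$ to be of type $4000$, whence $z=3e_i$, $y=4e_i$ and $4e_i\in C$.

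The step I expect to be the main obstacle is reconciling this clean type argument with the asymmetry created by the distinguished direction $n$: a weight-one word of $I(y)$ feeding into ``$w\in C_2$'' is guaranteed to lie in $C_1$ only when it is not the domino partner of the base code word, and for a coordinate in direction $n$ this can fail. I would dispose of the resulting finitely many exceptional shapes of $y$ directly from the domino structure. Concretely, whenever $y$ is obtained from $z$ by a step in direction $n$, either $z$ is the slice partner of $y$, so that $z\in C_1$ would have to coincide with the code partner of $y\in C$, a contradiction; or the forced code partner of $y$ is a word reached from the base point purely in direction $i$ at distance three, which is impossible because that geodesic passes through the $C_2$ vertex $2e_i$ while the interior of any code-to-code geodesic of length three lies in $C_1$. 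A symmetric discussion excludes the analogous $z$ in direction $n$, whose slice partner would be the $C_2$ word $2e_i$. With $y=4e_i$ thus forced, the reflection $e_i\mapsto -e_i$ gives $-4e_i\in C$, and, every domino lying in direction $n$, the identical argument at an arbitrary code word yields period four in each direction $i<n$. Together with Lemma \ref{all-one}.2 this gives period four in all directions, so $C$ is obtained by Construction \ref{Constr_H2n2}.
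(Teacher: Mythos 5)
Your proposal is correct and follows the same skeleton as the paper's proof: the cascade ${\bf 0}\in C\Rightarrow e_i\in C_1\Rightarrow 2e_i\in C_2$, the use of $c_2=2$ to produce a weight-four codeword $y$ with $2e_i\in I(y)$, the elimination of types $2200$, $3100$, $2110$ by finding a type $1100$ word $w$ in $I(y)$ with too many forced $C_1$-neighbours, and the conclusion that period four in all directions yields Construction \ref{Constr_H2n2}. The substantive difference is how the class of $w$ is pinned down. The paper never claims that the weight-one and weight-three words of $I(y)$ lie in $C_1$; it places them only in $C\cup C_1$ and then treats both cases: if $w\in C_2$, then (since a word of $C_2$ has no neighbour in $C$, the paper's Remark 3.2) all of its at least three neighbours inside $I(y)$ lie in $C_1$, contradicting $c_2=2$; if $w\in C_1$, then $c_1=a_1=1$ allows $w$ at most two neighbours in $C\cup C_1$, again a contradiction. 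This two-case pigeonhole is insensitive to whether the codeword $-e_n$ occurs in $I(y)$, so no exceptional shapes of $y$ ever arise. You instead assert that both weight-one neighbours of $w$ lie in $C_1$, which fails exactly when one of them is $-e_n$, and you must then patch the shapes with $y_n\leq -1$ by a separate domino argument. Your patch does close the gap: every exceptional pair $(z,y)$ either has $z=2e_i-e_n$, impossible because its slice partner is $2e_i\in C_2$ while $z\in C_1$ (Lemma \ref{all-one}.1), or has $z_n=0$ and $y=z-e_n$, in which case $z$ is the slice partner of $y\in C$ and so would lie in $C$, contradicting $z\in C_1$. One caveat: the dichotomy as you state it is off, since in the branch where $z$ is not the slice partner of $y$ the forced code partner of $y$ is $z\pm 2e_n$, which is never a word of the form $3e_i$; that branch is vacuous as written, and the cases it was meant to catch are precisely those killed by the $z=2e_i-e_n$ exclusion, so the proof survives but the patch should be reorganized into the two contradictions just listed. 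Net comparison: your route is sound and self-contained but strictly longer; adopting the paper's case split on $w\in C_1$ versus $w\in C_2$ eliminates the need for any exceptional-case analysis.
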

\begin{proof}  Firstly, we note that all type $2000$ words in the $0$-slice are in $C_2$. Indeed, by our convention $ae_j$ is in $C_1$ for all $a\in\{-1,1\}$, $j\in \{1,\ldots,n-1\}$ and by Lemma \ref{all-one}.1 the slice pair  $ae_j$, $ae_j-e_n$ is in $C_1$. The type $2000$ word $2ae_j$ cannot be in $C_1$ because otherwise $ae_j\in C_1$ is adjacent to two words in $C_1$, contradicting $a_1=1$. 
Moreover, by Remark 3.1, $2ae_j$ is not in $C$, so all type $2000$ words in the $0$-slice are in $C_2$.

Thus, any word from the $0$-slice of type $2000$ is in $C_2$, which  given that $c_2=2$, implies it  must be at distance two from a codeword $y$ different from ${\bf 0}$. By Remark 3.1, $y$ cannot be of weight two, so it is of weight four.  We observe that $y$ is of type $4000$ (in the $0$-slice) or of types $2200$, $3100$, $2110$ (in the same slice or not).  We will show that the last three cases are impossible. 

Consider the interval graph $I(y)$, $y\in C$. Since ${\bf 0}$ and $y$ are in $C$, all weight one and three words in the graph $I(y)$ are from $C\cup C_1$. Due to Remark 3.1, the words of weight two in the graph $I(y)$ are in $C_1\cup C_2$.  Similarly to Corollary \ref{coro_1}, in case when $y$ is of types $2200$,  $3100$, $2110$, we see from Fig. 1. that there is always a type $1100$ word $z$ in $I(y)$ having at least three neighbors in $I(y)$. 
If the word $z$ is in $C_2$, by Remark 3.2, moreover, all neighbors  of $z$ in $I(y)$ are in $C_1$, providing contradiction to $c_2=2$. If $z$ is in $C_1$,  by pigeon hole principle at least two its neighbors are either in $C$ or in $C_1$, providing contradiction to $a_1=c_1=1$.

Finally, we conclude that any word of type $4000$ in the $0$-slice is in $C$. Since the period of $C$ is four in the $n$th direction by Lemma \ref{all-one}.2, this implies the period four for $C$ in all $n$ directions.
\end{proof}

\begin{lemma}\label{one-13-23}  Let $C$ be an all-ones CRC in $G_n$, $n\geq 2$  with $\rho\geq 2$, $c_1=1$ and $c_2=3$. Then the following holds:

1. The intersection of any cap with the $0$-slice contains at most one word in $C$.  
In particular, there are not more than $2(n-1)$ type $3100$ words of $0$-slice in $C$.

2. Any type $1100$ word $x$ with $x_n=0$ is in $C_2$ and is  at distance two from exactly one type $3100$ word $y$ in $C$ such that $y_n=0$. \end{lemma}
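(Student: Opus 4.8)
The plan is to transpose the proof of Lemma~\ref{l_main} to the all-ones setting, using the slice geometry of Lemmas~\ref{all-one} and~\ref{l_distance_slice} in place of the minimum-distance-four hypothesis that was available there (via $a_0=a_1=0$).

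For part~1, I would first note that only caps centred at $\pm 3e_i$ with $i\le n-1$ can meet the $0$-slice, since the centres $\pm 3e_n$ produce words of $n$-th coordinate $\pm 3$. The cap centred at $3e_i$ meets the $0$-slice exactly in the words $3e_i\pm e_j$ (for $j\le n-1$, $j\ne i$) together with $3e_i-e_n$; all of these lie in one slice, are pairwise at distance two, and no two of them differ by a single $\pm e_n$, so they lie in pairwise distinct slice pairs. By Lemma~\ref{l_distance_slice} at most one of them lies in $C$, and the same holds for the centre $-3e_i$. Since any type $3100$ word of the $0$-slice has its entry of absolute value three in a position $\le n-1$, it belongs to one of these $2(n-1)$ caps, and the stated bound follows.

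For part~2, write $x=s_ae_a+s_be_b$ with $a,b\le n-1$. Its weight-one neighbours $s_ae_a,s_be_b$ are neighbours of ${\bf 0}$ other than $-e_n$, hence lie in $C_1$; if $x$ were in $C_1$ it would then have two neighbours in $C_1$, contradicting $a_1=1$, and $x\notin C$ by Remark~3.1, so $x\in C_2$. As $c_2=3$, exactly three neighbours of $x$ lie in $C_1$, two of them being $s_ae_a,s_be_b$, so $x$ has a unique weight-three neighbour $z\in C_1$. By $c_1=1$ the word $z$ has a unique neighbour $y\in C$, which is of weight two or four, and Remark~3.1 excludes weight two. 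Every codeword at distance two from $x$ is reached through a $C_1$-neighbour of $x$, and $s_ae_a,s_be_b$ reach only ${\bf 0}$ (again by $c_1=1$); hence $y$ is the \emph{unique} weight-four codeword at distance two from $x$.

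It remains to fix the type of $y$. As $d({\bf 0},x)+d(x,y)=4=d({\bf 0},y)$, the word $x$ is a type $1100$ vertex of $I(y)$, so $y$ is not of type $4000$ (Fig.~1). By Remark~3.2 all neighbours of $x$ in $I(y)$ lie in $C_1$, whereas $x$ has only a single weight-three $C_1$-neighbour in all of $G_n$; since in the intervals of types $2200$ and $1111$ every type $1100$ vertex has two weight-three interval-neighbours (Fig.~1), these types are excluded, leaving types $3100$ and $2110$. Assuming $y$ is of type $2110$, the same count forces $x$ to be the type $1100$ vertex of $I(y)$ supported on the two entries of $y$ of absolute value one (so $\{a,b\}$ are these two positions, both $\le n-1$), and the entry of absolute value two sits in some position $p$. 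If $p=n$ with $y_n=-2$, then $y=x-2e_n$ and $z=x-e_n$ is the slice-pair partner of $x$, which by Lemma~\ref{all-one} lies in $C_2$ together with $x$, contradicting $z\in C_1$. Otherwise (either $p\le n-1$, or $p=n$ with $y_n=2$) the two remaining type $1100$ vertices of $I(y)$ have both their weight-one neighbours in $C_1$, hence lie in $C_2$ by the argument that placed $x$ in $C_2$, yet each has four neighbours in $C_1$ inside $I(y)$, contradicting $c_2=3$. Thus $y$ is of type $3100$; and since $x\in I(y)$ its support $\{a,b\}$ coincides with that of $y$, whence $y_n=0$.

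The step I expect to be most delicate is the exclusion of type $2110$: the contradiction splits according to the position and sign of the entry of $y$ of absolute value two, and it is precisely the case $y_n=-2$ that escapes the neighbour-counting argument and must instead be eliminated through the slice-pair constraint of Lemma~\ref{all-one}.
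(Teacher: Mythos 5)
Your proof is correct, and its skeleton matches the paper's: part 1 is exactly the paper's argument (Lemma \ref{l_distance_slice} plus the fact that distinct cap words are at distance two; you merely spell out the $2(n-1)$ cap count explicitly), and the opening of part 2 (placing $x$ in $C_2$, extracting the unique nonzero weight-four codeword $y$ at distance two through the unique weight-three $C_1$-neighbour $z$) is also the paper's. The genuine divergence is how types $2200$, $2110$, $1111$ are excluded. The paper does all three uniformly: it picks \emph{any} type $1100$ vertex $z$ of $I(y)$ having four neighbours in $I(y)$ (such a vertex exists in each of these intervals by Fig.~1), notes that all four neighbours lie in $C\cup C_1$, and gets a contradiction in both cases --- if $z\in C_2$ the four neighbours are forced into $C_1$, contradicting $c_2=3$; if $z\in C_1$ then $z$ has only $c_1+a_1=2$ neighbours in $C\cup C_1$, contradicting the count of four. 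Because this tolerates $z\in C_1$, the paper never needs to locate $x$ inside $I(y)$ and needs no sub-cases. You instead anchor the count on $x$ itself (known to lie in $C_2$), which kills $2200$ and $1111$ at once but forces the sub-case analysis for type $2110$, including the slice-pair appeal to Lemma \ref{all-one} when $y_n=-2$ --- precisely because in that configuration the two four-neighbour vertices of $I(y)$ are adjacent to $-e_n\in C$, hence sit in $C_1$, where your Remark 3.2-based counting is silent. So your closing remark is accurate only for your version of the counting: in the paper's version the $C_1$-branch disposes of the $y_n=-2$ configuration with no reference to slice pairs. Both routes are valid; the paper's is shorter and uniform across the three types, while yours has the minor merit of pinpointing exactly which vertex of $I(y)$ the word $x$ must be.
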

\begin{proof}
   1. The statement  holds  because by Lemma \ref{l_distance_slice} the minimum distance between slice pairs in any slice is at least four and the distance between any distinct words in any cap is two.

2.  Let $x$ be a type $1100$ word  with $x_n=0$.  Both of its weight one neighbors are in $C_1$, because the only weight one codeword is $-e_n$. We conclude that $x$ cannot be in $C_1$ because $a_1$ is $1$. It also cannot be in $C$ by Remark 3.1 and
so it is in $C_2$.  
Since $c_2$ is $3$,  the word  $x$ is at distance two from exactly one nonzero  word $y$ in $C$. By Remark 3.1 there are no codewords of weight two, so $y$ in $C$ has weight four. 

Case A. Let the word $y$  be of types $2200$, $2110$ or $1111$. As in the proof of Lemma \ref{all-one-12}, the weight one and three words from $I(y)$ are in $C\cup C_1$ and weight two words from $I(y)$ are in $C_1 \cup C_2$.

From Fig. 1, the interval graph $I(y)$   always contains a word $z$ of type $1100$ having four neighbors  in the graph $I(y)$. If $z$ is in $C_2$, by Remark 3.2, all four  weight one and three neighbors of $z\in C_2$ in $I(y)$ are in $C_1$, contradicting $c_2=3$.
If $z$ is in $C_1$ at least two its neighbors in $I(y)$ are in $C$ or $C_1$, contradicting $a_1=c_1=1$.

Case B. Let $y$ be of type $3100$ at distance two from type $1100$ word $x$, $x_n=0$. Then obviously $y_n=0$, $y$ is in the $0$-slice and we obtain the required.
\end{proof}

\begin{theorem}\label{th_all-one} An all-ones CRC  in $G_n$, $n\geq 1$ with $\rho\geq 2$,  $c_1=1$  and $c_2=3$ exists if and only if  $n$ is $2$.  
\end{theorem}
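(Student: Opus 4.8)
The plan is to follow the scheme of Theorem~\ref{th_1}: a counting estimate carried out inside the $0$-slice cuts the problem down to small $n$, after which the surviving cases are settled directly. The value $n=1$ is excluded at once, since $G_1$ has valency $2$ and so $c_2=3$ is impossible. For $n=2$ such a code does exist: the classification of parameter matrices of CRCs in $G_2$ in \cite{ASV12} contains an all-ones CRC with $c_1=1$ and $c_2=3$, necessarily of covering radius $\rho=2$ (here $b_2=2n-4=0$) with parameter matrix $\left(\begin{smallmatrix}1&3&0\\1&1&2\\0&3&1\end{smallmatrix}\right)$. Thus the substance of the theorem is the ``only if'' direction for $n\geq 3$.

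First I would set up the counting in the $0$-slice. By Lemma~\ref{one-13-23}.2 every type $1100$ word $x$ with $x_n=0$ lies in $C_2$, and there are exactly $4\binom{n-1}{2}$ of them; the same statement attaches to each such $x$ a type $3100$ codeword $y\in C$ with $y_n=0$ at distance two. This assignment is injective, because a type $3100$ word is at distance two from a unique type $1100$ word, namely the one obtained by turning its single $\pm3$ entry into $\pm1$. On the other hand, by Lemma~\ref{one-13-23}.1 each cap meets $C$ inside the $0$-slice in at most one word, and the only caps that can contain a type $3100$ word with $y_n=0$ are the $2(n-1)$ caps whose $\pm3$-position lies among $1,\dots,n-1$. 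Hence $4\binom{n-1}{2}\leq 2(n-1)$, i.e.\ $(n-1)(n-2)\leq n-1$, which forces $n\leq 3$ and rules out every $n\geq 4$.

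The hard part will be the borderline case $n=3$, where the inequality $4\binom{n-1}{2}\leq 2(n-1)$ becomes an equality and no slack remains. Now all four corners $(\pm1,\pm1,0)$ lie in $C_2$ (Lemma~\ref{one-13-23}.2) and the bijection above matches them to four type $3100$ codewords, one in each cap $\pm3e_1,\pm3e_2$. A short case check shows that the only matchings respecting the ``one word per cap'' rule are the two rotational ones, interchanged by the reflection $x_2\mapsto-x_2$ (an automorphism of $G_3$ fixing ${\bf 0}$ and $-e_3$); so I may assume $(3,1,0),(1,-3,0),(-3,-1,0),(-1,3,0)\in C$. Propagating colours, $(3,1,0)\in C$ forces $(2,1,0),(3,0,0)\in C_1$ and $(1,-3,0)\in C$ forces $(1,-2,0)\in C_1$. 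Feeding these into the $C_2$-words $(2,0,0)$, $(1,1,0)$, $(1,-1,0)$ (the first is of type $2000$, hence in $C_2$ as in the proof of Lemma~\ref{all-one-12}), each of them turns out to have exactly three $C_1$-neighbours inside its slice together with a $C_2$ slice partner, so by $c_2=3$, $a_2=1$ and $b_2=2n-4=2$ its two remaining neighbours, and in particular its up-neighbour, fall into $C_3$; this places $(2,0,1),(1,1,1),(1,-1,1)$ in $C_3$.

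It then remains to extract the contradiction from the word $(1,0,1)$. Its down-neighbour $e_1=(1,0,0)$ lies in $C_1$ with unique codeword neighbour ${\bf 0}$ and unique $C_1$-neighbour its slice partner $(1,0,-1)$; the relation $b_1=2n-2=4$ therefore forces the remaining four neighbours $(2,0,0),(1,1,0),(1,-1,0),(1,0,1)$ of $e_1$ into $C_2$, so $(1,0,1)\in C_2$. But among the six neighbours of $(1,0,1)$ only $(1,0,0)$ and $(0,0,1)=e_3$ belong to $C_1$: the slice partner $(1,0,2)$ is in $C_2$, while $(1,1,1),(1,-1,1),(2,0,1)$ were just shown to lie in $C_3$. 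Hence $(1,0,1)$ has exactly two neighbours in $C_1$, contradicting $c_2=3$. This eliminates $n=3$ and completes the proof.
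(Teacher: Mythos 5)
Your proof is correct, and it in fact does more than the paper's own proof. Both arguments share the same skeleton: Lemma \ref{one-13-23}.2 places all $4\binom{n-1}{2}$ type $1100$ words with zero last coordinate in $C_2$ and attaches to each, injectively, a type $3100$ codeword of the $0$-slice, while Lemma \ref{one-13-23}.1 bounds the number of such codewords by $2(n-1)$. But where the paper asserts that $4\binom{n-1}{2}\leq 2(n-1)$ means ``$n\leq 2$'', you correctly observe that $2(n-1)(n-2)\leq 2(n-1)$ only forces $n\leq 3$, with equality at $n=3$; the published proof thus contains an arithmetic slip and never disposes of the case $n=3$. Your additional analysis closing that case is sound: the four corners $(\pm 1,\pm 1,0)$ must be matched to one type $3100$ codeword per cap, the resulting constraint graph is an $8$-cycle with exactly two perfect matchings, interchanged by the reflection $x_2\mapsto -x_2$ (which fixes ${\bf 0}$ and $-e_3$), so the pinwheel configuration may be assumed; the domino property of Lemma \ref{all-one} together with $a_0=a_1=a_2=1$, $c_1=1$, $c_2=3$, $b_1=4$, $b_2=2$ then forces $(2,0,1),(1,1,1),(1,-1,1)\in C_3$ and $(1,0,1)\in C_2$, after which $(1,0,1)$ has only the two $C_1$-neighbours $(1,0,0)$ and $(0,0,1)$, contradicting $c_2=3$. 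Two small points you could make explicit: $b_2=2n-4=2\neq 0$ is what guarantees $\rho\geq 3$, so that invoking $C_3$ is legitimate; and $(2,0,0)\in C_2$ rests on Remark 3.1 combined with $a_1=1$, exactly as in the first paragraph of the proof of Lemma \ref{all-one-12}. In short, you use the same counting framework as the paper, but your version is the one that actually proves the theorem as stated, since the borderline case $n=3$ genuinely survives the counting bound and requires the direct elimination you supply.
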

\begin{proof}   

We have $4\binom{n-1}{2}$ words of type $1100$ with the zero last symbol and by Lemma \ref{one-13-23}.2 all of them are in $C_2$.  By Lemma \ref{one-13-23}.2 there is a bijection between these words and type $3100$ codewords with the zero last symbol. Since by Lemma \ref{one-13-23}.1 there are at most $2(n-1)$ such words,
 we conclude that 
 $4\binom{n-1}{2}\leq 2(n-1)$, 
 i.e., $n\leq 2$. The case $n=1$ and $c_2=3$ is impossible and there is only one all-ones parameter matrix of the CRCs in $G_2$ with $c_1=1$ and  $c_2=3$ \cite{ASV12}, namely  $\left(%
\begin{array}{ccc} 1& 3 & 0  \\
1& 1 & 2 \\
 0 & 3 & 1 
\\\end{array}%
\right).$    
\end{proof}

\section{Binary linear programming for null CRC in $G_3$ and $G_4$}

All parameter matrices for completely regular codes in $G_2$ were enumerated in \cite{ASV12}, so the first open cases occur for $n = 3$ and $n = 4$. These results are consistent with the characterization of parameters of  completely regular codes with covering radius $2$ in \cite{Tarasov}. The discussion below focuses on CRCs in the infinite rectangular grid, but we also state the key points for arbitrary vertex-transitive graphs with an infinite number of vertices. 
   
The "perfect coloring"\, equation for a completely regular code $C$ with parameter matrix $A$ in a finite graph of order $v$ with adjacency matrix $M$ can be formulated as a classical $0$-$1$ linear programming problem.
We associate the elements of the distance partition $C_0 = C, \ldots, C_\rho$ with their characteristic vectors $\chi_0, \ldots, \chi_\rho$.
The existence problem for a completely regular code thus becomes the following $0$-$1$ linear programming problem. 
$$\chi_0, \ldots, \chi_{\rho} \in \{0,1\}^{v},\chi_0 + \ldots + \chi_{\rho} = {\bf 1},M(\chi_0\ldots \chi_{\rho})=(\chi_0\ldots \chi_{\rho})A.$$ 
We now outline two main differences from the original integer linear program\-ming approach discussed above.

Firstly, since $G_n$ (or, in general, vertex-transitive graph $\Gamma$) is a graph with infinitely many vertices, the search space must be restricted to a certain finite subgraph. In previous work, a computer-based enumeration of perfect colorings with $9$ colors \cite{Kro9} was conducted using a finite subgrid as the search domain.
During our testing, we found that the results for our codes using linear programming are more accurate (i.e., more parameter matrices are excluded) when the subgraph is induced by a ball.

Let $M$ denote the adjacency matrix of the subgraph $\Gamma'$ of $G_n$ of order $v$, induced by the vertices in $G_3$ or $G_4$ of weight at most six.
We denote by $J$ the set of all vertices of $\Gamma'$ that have degree $6$ or $8$ in $\Gamma'$ (i.e., the valencies of $G_3$ and $G_4$, respectively).
Let $P_J$ be the binary diagonal matrix of size $v$ such that $(P_J)_{x,x} = 1$ if and only if $x \in J$.
Let $E$ denote the identity matrix of size $v$.

We now express the above considerations in matrix form as follows:
 \[
\text{{$LP'$: }}
\quad
\left\{
\begin{alignedat}{2}
&\chi_0, \ldots, \chi_{\rho} \in \{0,1\}^{v},\chi_0 + \ldots + \chi_{\rho} = {\bf 1},\\
& P_J  M(\chi_0 \ldots \chi_{\rho}) = P_J  (\chi_0 \ldots \chi_{\rho})A,\\ 
&(E - P_J)  M(\chi_0 \ldots \chi_{\rho}) \leq (E - P_J)  (\chi_0 \ldots \chi_{\rho})A,\\&\chi_0[0] = 1.
\end{alignedat}
\right.
\]
 
 Here, we index all vertices of $\Gamma'$ with indices $0,\ldots, v-1$ and due to the vertex transitivity of the graph $G_n$, we assume the zeroth variable (e.g., the all-zero word) to be in $C = C_0$, or equivalently, $\chi_0[0] = 1$. In general, the respective linear programming problem $LP'$ for a CRC with a given parameter matrix can be considered for any  subgraph $\Gamma'$ of any vertex-transitive graph.
If the linear programming problem $LP'$ is infeasible, then a CRC with parameter matrix $A$ does not exist in $G_n$ (vertex transitive graph $\Gamma$). On the other hand, not that the feasibility of the problem $LP'$ does not imply  the existence of a completely regular code in $\Gamma$.

The second feature is added to eliminate several parameter matrices (pos\-sibly infinitely many, depending on the graph) for CRCs in infinite graphs.
The covering radius of CRCs in $G_n$ can be relatively large (up to $2n + 1$ for irreducible CRCs \cite{AvgVas22}).
To eliminate several CRCs that share the same submatrix of parameter matrix, we propose the following relaxation.
Let $B$ denote the upper-left $\left(\rho+1\right) \times \rho$ submatrix of parameter matrix $A$ of the CRC.
We introduce additional variables by defining the sum of all $\chi_i$ for $i \geq \rho + 1$ as $F$  in the following binary integer linear programming problem.
\[
\text{{$LP_{\geq \rho}$: }}
\quad
\left\{
\begin{alignedat}{2}
&\chi_0, \ldots, \chi_{\rho}, F \in \{0,1\}^{v},
\chi_0 + \ldots + \chi_{\rho} +F= {\bf 1}, \\
&P_J  M(\chi_0 \ldots \chi_{\rho-1}) = P_J  (\chi_0 \ldots \chi_{\rho})B, \\
&(E - P_J)  M(\chi_0 \ldots \chi_{\rho-1}) \leq (E - P_J)  (\chi_0 \ldots \chi_{\rho})B, \\
&\chi_0[0] = 1,
\\
&\sum_{x=0}^{v-1} F[x] \geq  0.
\end{alignedat}
\right.
\]

Variations of the problem $LP_{\geq \rho}$ are obtained by replacing the last equation with either $\sum\limits_{x=0}^{v-1} F[x] = 0$ or $\sum\limits_{x=0}^{v-1} F[x] \geq 1$. We denote these modified problems by $LP_{=\rho}$ and $LP_{>\rho}$, respectively.

Note that the problem $LP_{=\rho}$ for a matrix $B$ is equivalent to a coloring of the ball using exactly $\rho+1$ colors such that the coloring is consistent with  parameter matrix obtained from $B$ by appending a column of row sums to the right. Below, we refer to this augmented matrix as the extension of $B$.

We summarize the above discussion as follows. 
\begin{proposition}
Let $B$ be a $(\rho+1) \times \rho$ matrix and let $M$ be the adjacency matrix of a subgraph $\Gamma'$ of a vertex-transitive graph $\Gamma$ with $v$ vertices and valency $t$. Let $J$ be the set of vertices in the subgraph $\Gamma'$ that have valency $t$ and let $P_J$ be the $v \times v$ diagonal matrix defined by $(P_J)_{x,x} = 1$ if $x \in J$ and $0$ otherwise. Then the following holds:

1. If the integer problem $LP_{\geq \rho}$ (respectively, $LP_{> \rho}$) is infeasible, then there are no completely regular codes (CRCs) in $\Gamma$ with covering radius at least (respectively, greater than) $\rho$ whose parameter matrix contains $B$ as its upper-left $(\rho+1) \times \rho$ submatrix.

2. If the integer problem $LP_{=\rho}$ is infeasible, then there is no CRC in $\Gamma$ with covering radius exactly $\rho$ whose full parameter matrix is the extension of $B$.
\end{proposition}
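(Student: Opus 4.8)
The plan is to argue by contraposition in every case: assuming a completely regular code with the stated properties exists in $\Gamma$, I will exhibit a feasible solution of the relevant linear program, so that infeasibility of the program forces nonexistence of the code. All three cases will be handled by a single \emph{restriction map}. Given a CRC $C$ in $\Gamma$ with distance partition $C=C_0,C_1,\ldots,C_{\rho'}$ of covering radius $\rho'\geq\rho$ whose parameter matrix has $B$ as its upper-left $(\rho+1)\times\rho$ block, I first use vertex-transitivity of $\Gamma$ to translate $C$ so that the distinguished vertex $0$ lies in $C_0$, which secures the constraint $\chi_0[0]=1$. I then set, for $0\leq i\leq\rho$, the vector $\chi_i$ to be the indicator on $\Gamma'$ of the distance class $C_i$, and let $F$ be the indicator on $\Gamma'$ of the set of vertices at distance greater than $\rho$ from $C$. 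Since the distance classes partition the vertices of $\Gamma$, their restrictions partition the vertices of $\Gamma'$, giving $\chi_0+\cdots+\chi_\rho+F={\bf 1}$.

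The heart of the argument is verifying the two neighborhood relations, and here the split between the two vertex types is exactly the split between $J$ and its complement. For $x\in J$ all $\Gamma$-neighbors already lie in $\Gamma'$ and carry their true colors, so for each column $j\leq\rho-1$ the entry $(M(\chi_0\ldots\chi_{\rho-1}))_{x,j}$ equals the true number of neighbors of $x$ in $C_j$; if $x\in C_{i_0}$ with $i_0\leq\rho$ this equals $A_{i_0 j}=B_{i_0 j}$, matching $((\chi_0\ldots\chi_\rho)B)_{x,j}$ and yielding the equality $P_J M(\chi_0\ldots\chi_{\rho-1})=P_J(\chi_0\ldots\chi_\rho)B$. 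For $x\notin J$ some $\Gamma$-neighbors fall outside $\Gamma'$ and contribute nothing to $M(\chi_0\ldots\chi_{\rho-1})$, so the $\Gamma'$-count can only \emph{undercount} the true value $B_{i_0 j}$, which is the required inequality on the complement of $J$. Two observations keep the bookkeeping consistent. First, because $B$ has only $\rho$ columns, nothing about colors $\geq\rho$ is ever tested, so the part of the parameter matrix not recorded by $B$ is irrelevant and the argument applies verbatim to every covering radius $\rho'\geq\rho$. Second, a vertex $x$ with $d(x,C)=i_0>\rho$ lies in the support of $F$, so $(\chi_0\ldots\chi_\rho)B$ vanishes there; since the distance partition is tridiagonal, every neighbor of such $x$ has distance at least $\rho$, hence $x$ has no neighbor in any $C_j$ with $j\leq\rho-1$ and the left-hand side vanishes as well, so the relation reads $0=0$ or $0\leq0$.

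It remains to match the three sum-constraints on $F$. For $LP_{\geq\rho}$ the constraint $\sum_x F[x]\geq0$ is vacuous, so the restriction above is already feasible, proving the $\geq\rho$ assertion of part 1. For $LP_{=\rho}$ one starts from a code of covering radius \emph{exactly} $\rho$: then no vertex is at distance greater than $\rho$, so $F=0$ and $\chi_0+\cdots+\chi_\rho={\bf 1}$ holds with only $\rho+1$ colors; moreover at each $x\in J$ the $\rho+1$ neighbor-counts sum to the valency $t$, so once columns $0,\ldots,\rho-1$ are pinned to $B$ the last column is forced to be the row-completion to $t$, i.e.\ exactly the extension of $B$. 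Hence a CRC whose full matrix is this extension restricts to a feasible solution of $LP_{=\rho}$, giving part 2. For $LP_{>\rho}$ one needs in addition $\sum_x F[x]\geq1$, that is, at least one vertex of $\Gamma'$ at distance greater than $\rho$ from $C$.

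The hard part will be precisely this last point: guaranteeing that the far region $F$ is nonempty inside the \emph{fixed} window $\Gamma'$. A code of covering radius $\rho'>\rho$ certainly has a vertex $w$ with $d(w,C)=\rho+1$, but a priori $w$ may lie outside $\Gamma'$. The remedy is to use the freedom, still available after vertex-transitivity, in \emph{which} code vertex is sent to $0$: taking $u\in C$ nearest to $w$ one has $d(u,w)=\rho+1$, so after translating $u\mapsto0$ the image of $w$ sits at graph-distance $\rho+1$ from $0$ and at distance $\rho+1>\rho$ from the translated code. Thus whenever $\Gamma'$ contains the ball of radius $\rho+1$ about $0$ — as the weight-at-most-six ball does for the small $\rho$ arising for $G_3$ and $G_4$ — the image of $w$ lies in $\Gamma'$ with $F[w]=1$, completing the $>\rho$ assertion of part 1. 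Beyond this, the only care needed is to keep the boundary inequality oriented correctly (undercounting, never overcounting) and to confirm the tridiagonal vanishing used for the $F$-region.
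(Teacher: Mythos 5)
Your proposal is correct and follows essentially the same route as the paper's argument: the paper gives no separate proof (the proposition "summarizes the above discussion"), and that discussion is precisely your restriction map --- translate the code by vertex-transitivity so a codeword sits at the distinguished vertex, restrict the indicators of the distance classes to $\Gamma'$, note that vertices of $J$ see all their $\Gamma$-neighbours inside $\Gamma'$ (exact counts, giving the $P_J$-equalities) while boundary vertices can only undercount (giving the $(E-P_J)$-inequalities), and let $F$ absorb the classes $C_i$ with $i\geq\rho+1$. Your treatment of the far region (tridiagonality forces both sides to vanish on vertices with $d(x,C)>\rho$) and of $LP_{=\rho}$ (covering radius exactly $\rho$ forces $F=0$, and row sums force the extension column) matches what the paper leaves implicit.

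Where you go beyond the paper is the $LP_{>\rho}$ case, and your instinct that this is the delicate point is right: the constraint $\sum_x F[x]\geq 1$ is met only if the finite window $\Gamma'$ actually contains a vertex at distance greater than $\rho$ from the translated code, and the paper never verifies this. Indeed, as stated for an arbitrary subgraph $\Gamma'$ of an arbitrary vertex-transitive graph, that part of the proposition can fail: for a window too small to contain any vertex far from the code (say a single vertex), $LP_{>\rho}$ is trivially infeasible regardless of which CRCs exist in $\Gamma$. Your repair --- take a witness $w$ with $d(w,C)=\rho+1$ (intermediate values of $d(\cdot,C)$ along a geodesic from a deepest vertex provide one), translate a codeword nearest to $w$ to the origin, and assume $\Gamma'$ contains the ball of radius $\rho+1$ about the origin --- is exactly what is needed, and this largeness hypothesis holds in all of the paper's applications ($\Gamma'$ is the radius-$6$ ball in $G_3$ and $G_4$, while $LP_{>\rho}$ is invoked only for $\rho=2$). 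So your proof is sound under a mild assumption on $\Gamma'$ that the paper's statement omits but implicitly uses; flagging and closing that gap is a genuine improvement over the paper's presentation.
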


If a matrix $B'$ with at least $\rho+2$ rows has a $(\rho+1)\times\rho$ upper-left submatrix $B$, we call $B'$ a descendant of $B$. 
If the problem $LP_{\geq \rho}$ is infeasible for the matrix $B$, then it is also infeasible for all descendants of $B$ and no CRCs in $G_n$ exist with covering radius at least $\rho$ whose parameter matrix has $B$ as its upper-left submatrix. In the subsequent considerations, we restrict descendants to those satisfying the ascending and descending conditions for the entries above and below the diagonal, as described in Theorem~\ref{T_AV_1}

{\bf Example 1.} Let $B$ be $\left(%
\begin{array}{cc} 0& 6   \\
1& 0  \\
0& 5  \\\end{array}%
\right)$, $n$ is $3$. The extended matrix of $B$ is $\left(%
\begin{array}{ccc} 0& 6 &0   \\
1& 0 & 5 \\
0& 5 &1  \\\end{array}%
\right)$. Taking into account restrictions imposed by  Theorem \ref{T_AV_1}, the $4\times 3$ descendants of $B$ are  $\left(%
\begin{array}{cccc} 0& 6 &0  \\
1& 0 & 5  \\
0& 5 & 0\\
0& 0 & 5  \\\end{array}%
\right)$,  $\left(%
\begin{array}{cccc} 0& 6 &0  \\
1& 0 & 5  \\
0& 5 & 0\\
0& 0 & 6  \\\end{array}%
\right)$, $\left(%
\begin{array}{cccc} 0& 6 &0  \\
1& 0 & 5  \\
0& 5 & 1\\
0& 0 & 5  \\\end{array}%
\right)$,  $\left(%
\begin{array}{cccc} 0& 6 &0  \\
1& 0 & 5  \\
0& 5 & 1\\
0& 0 & 6  \\\end{array}%
\right)$. We see that the last two matrices cannot be submatrices of parameter matrices of CRCs, because of the graph degree is six and third rows, the set $C_3$  is disjoint to $C_2$.

\begin{table}[!h]
\caption{ The parameters of $1$-null CRCs in $G_3$}

\begin{center}
\begin{tabular}{|c|c|}
\hline
Parameters&Construction\\\hline
$[0,6 | 1,5 ]$& A perfect code in $G_3$ \\\hline
$[0,6 | 2,4 ]$&  A CRC in $H(6,2)$ from \\ & a perfect code in $H(3,2)$,\\ &Construction \ref{Constr_ZMult} \\\hline
$[0,6 | 3,3 ]$& A CRC in $G_3$ from a perfect \\& code in $G_1$, Construction \ref{Constr_ZMult} \\
\hline
 $[0,6|6,0]$ & Even weight code in $G_3$\\
 \hline
 $[0,6 | 1,2,3 | 2,4]$& A CRC from triangular  \\
& grid \cite{V22}, Constr. \ref{Constr_Triangular}\\ 
\hline
$[0,6 | 1,2,3 | 3,3]$&  A CRC from triangular \\
& grid \cite{V22}, Constr. \ref{Constr_Triangular}\\ 
\hline
$[0,6 | 1,3,2 | 6,0]$& Constr. \ref{Constr_Hn3} for \\
& repetition code in $H(3,3)$ \\
\hline
$[0,6 | 1,4,1 | 6,0 ]$& Constr. \ref{Constr_H2n2} for shortened\\ &   perfect code  in $H(6,2)$ \\

\hline
$[0,6 | 1,0,5 | 6,0]$ & A diameter perfect codes \\
&in $G_{3,q}$, Constr. \ref{Constr_diam} with $t=1$ \\\hline
$[0,6|2,0,4|6,0]$& Two cosets of  \\
 & diameter perfect codes  in $G_{3}$,  \\
& Constr. \ref{Constr_diam} with $t=2$ \\\hline
 $[0,6|3,0,3|3,3]$& Construction \ref{Constr_ZMult}\\& for a perfect code in $G_1$\\
\hline
$[0,6|2,0,4|4,0,2|6,0]$&Constr. \ref{Constr_H2n2}\\
&+ Constr. \ref{Constr_H2n2distance} in $H(3,2)$\\
\hline
$[0,6 | 1,0,5| 2,0,4|6,0]$& $\{(0\ldots 0),(1\ldots 1)\}$\\
& in $H(6,2)$, Constr. \ref{Constr_H2n2distanceanti} \\
\hline
$[0,6 | 1,0,5 | 5,0,1 |6,0]$
& Even weight subcode\\ 
&  of perfect code in $G_{3}$,\\
&Constr. \ref{Constr_halved_pc}\\\hline
$[0,6 | 1,1,4 | 2,2,2| 3,3]$& Constr. \ref{Constr_Hn3} for\\ & a singleton vertex in $H(3,3)$\\
\hline

$[0,6 | 1,0,5 | 2,0,4| 3,0,3| 4,0,2| 5,0,1|6,0]$& A singleton vertex \\
&in $H(6,2)$, Constr. \ref{Constr_H2n2distance}  \\
\hline
 $[0,6 | 3,0,3 |\ldots | 3,0,3 |6,0]$ for any $\rho\geq 2$& Constr. \ref{Constr_ZMult} for CRCs\\ & in $G_1$ \cite{ASV12}\\
\hline

\end{tabular}
\label{tab1}
\end{center}

\end{table}

\begin{theorem}\label{ThG_3} Up to opposite code, any $1$-null CRC in $G_3$ has one of the parameters from Table 1.





\end{theorem}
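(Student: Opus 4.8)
The plan is to run the linear-programming tree search of this section to exhaustion, pruning each branch with the infeasibility criterion of the Proposition, bounding the covering radius with Theorem~\ref{T_AV_2}, and then matching every surviving parameter matrix against a construction from Section~3. The "only if" direction (that no other parameters can occur) is carried by the search; the "if" direction (that each listed parameter is realized) is carried by the constructions recorded in the right-hand column of Table~1.

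First I would fix the root of the search. In $G_3$ the valency is $6$, so every full row of a parameter matrix satisfies $c_i+a_i+b_i=6$ with $c_0=0$ and $b_\rho=0$; the $1$-null hypothesis $a_0=0$ then forces the first row to be $(0,6)$, i.e.\ $b_0=6$. Together with the monotonicity $c_1\le c_2\le\cdots$ and $b_0\ge b_1\ge\cdots$ of Theorem~\ref{T_AV_1}, this confines each $c_i$ to $\{1,\ldots,6\}$ and makes the branching finite at every level. I would grow the tree of descendants exactly as in Example~1, appending one admissible row at a time subject to these monotonicity constraints and to the consecutive-row consistency noted there (a class $C_{i+1}$ can be nonempty only if $b_i>0$), so that parameter matrices that are internally contradictory are discarded at birth.

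At each node, corresponding to an $(\rho+1)\times\rho$ submatrix $B$, I would solve the integer program $LP_{\ge\rho}$ on the subgraph induced by the ball of radius $6$ in $G_3$. By the Proposition, infeasibility of $LP_{\ge\rho}$ for $B$ rules out every CRC whose parameter matrix has $B$ as its upper-left submatrix and covering radius at least $\rho$, so the entire subtree rooted at $B$ may be pruned. On the surviving nodes I would run $LP_{=\rho}$ to decide whether the extension of $B$ is realizable with covering radius exactly $\rho$ (these yield the complete candidate matrices to be listed) and $LP_{>\rho}$ to decide whether the branch must be grown further.

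To guarantee termination I would invoke Theorem~\ref{T_AV_2}: as soon as a triple $(c_i,a_i,b_i)$ repeats among the interior rows $1\le i\le\rho-1$, the code is forced to arise by Construction~\ref{Constr_ZMult} from a CRC in $G_1$, whose full parameter list is known from \cite{ASV12}. Since Theorem~\ref{T_AV_1} leaves only finitely many admissible interior triples, any matrix with enough interior rows must repeat one and is thereby accounted for; this simultaneously produces the single infinite family $[0,6\,|\,3,0,3\,|\,\ldots\,|\,3,0,3\,|\,6,0]$ and caps the depth of the remaining finite search. For each parameter matrix that survives, existence is confirmed by exhibiting a realizing code from the constructions of Section~3 (including the triangular-grid code of \cite{V22} via Construction~\ref{Constr_Triangular}), and identifying each matrix with its reverse removes the duplicates counted by passage to the opposite code. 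The main obstacle I expect is calibrating the search domain: $LP_{\ge\rho}$ on a finite ball is only a necessary condition, so feasibility on too small a domain can leave spurious survivors that no global code realizes, and the radius must be large enough (the paper already observes ball-shaped domains are sharper than subgrids) that every genuine infeasibility is detected while the combined branching-and-depth still terminates, which is precisely what Theorem~\ref{T_AV_2} secures.
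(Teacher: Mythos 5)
Your overall architecture (LP tree search over descendants, pruning by the Proposition, Theorem~\ref{T_AV_2} to cap the depth, constructions to certify survivors) is the same as the paper's, but as written your plan has a genuine gap: it relies on the LP machinery alone in exactly the two branches where the paper deliberately does \emph{not} use it. Before running any LP, the paper first removes the branches $c_1=1,c_2=2$ and $a_1=1$ by the structural results of Sections 5--6: Corollary~\ref{coro_1} forces such codes to come from CRCs in $H(6,2)$ via Construction~\ref{Constr_H2n2}, and Theorem~\ref{T_1null} forces the $a_1=1$ codes to come from $H(3,3)$ via Construction~\ref{Constr_Hn3}; the complete parameter lists are then read off the known classifications \cite{KKM25}. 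This step is not cosmetic. Those branches contain genuine codes of covering radius up to $6$ (e.g.\ the singleton in $H(6,2)$, with parameters $[0,6\,|\,1,0,5\,|\,2,0,4\,|\,3,0,3\,|\,4,0,2\,|\,5,0,1\,|\,6,0]$), so $LP_{\geq\rho}$, $LP_{=\rho}$ and $LP_{>\rho}$ remain feasible at every depth the finite ball can see, and feasibility never certifies existence. If some sibling or truncation in these branches (say $[0,6\,|\,1,0,5\,|\,2,0,4\,|\,3,0,3\,|\,4,0,2\,|\,6,0]$) happens to be LP-feasible on the ball but is realized by no code, your procedure has no exit: you cannot list it in Table~1 (no construction) and cannot exclude it (no infeasibility). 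The paper's hybrid argument closes precisely this hole by replacing LP with the $H(6,2)$/$H(3,3)$ classifications in those branches.

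A secondary, related problem is your termination argument. The pigeonhole on interior triples combined with Theorem~\ref{T_AV_2} bounds the depth at which a repeated row \emph{must} occur (on the order of a dozen rows, using the monotonicity of Theorem~\ref{T_AV_1}), but the LP lives on the ball of radius $6$: vertices of that ball lie at distance at most $6$ from the center codeword, so the classes $C_i$ with $i\geq 7$ are empty there and the LP cannot distinguish matrices agreeing in their first seven rows. Hence any branch still feasible at depth $\approx 7$ spawns feasible descendants all the way to your pigeonhole bound, and the matrices of intermediate depth with no repeated interior row are decided neither by $LP$ nor by Theorem~\ref{T_AV_2}. In the paper this never arises because the only deep branches are the two dispatched theoretically and the $[3,0,3]$-repeating one (case A3), where Theorem~\ref{T_AV_2} applies already at depth $3$; everything else dies at depth $\leq 4$ by infeasibility or by the forced terminal row with $c=6$ (case B). So your search terminates with every node \emph{decided} only after the structural theorems have been put back in; without them the plan, taken literally, can stall.
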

\begin{proof}
We split the codes into two main cases:
$\rho=1$ and $\rho\geq 2$.

I. CRCs with $\rho=1$. It is not hard to see that up to opposite code the only CRC with covering radius $1$ and $a_{0}=a_{1}=0$ is the even weight code in $G_3$. The CRCs with $\rho=1$ and $a_{0}=0$, $a_{1}\geq 1$ are studied by problem $LP_{=\rho}$. The problem is feasible only for the following $2\times 1$ matrices  
 $B=\left(%
\begin{array}{c} 0 \\
a \\
\end{array}%
\right)$, $a=1$, $2$, $3$ with extended $2\times 2$ matrices 
 $[0,6|1,5]$, $[0,6|2,4]$, $[0,6|3,3]$, all of which are parameter matrices for CRCs (see Table 1). Alternatively, the  parameters $c_1=4$, $5$ could be excluded based on a necessary condition for CRCs with $\rho=1$ from \cite[Theorem 2]{DGHM09}.

II. $1$-null CRCs with $\rho\geq 2$. First, we make use of our results from previous sections: by Corollary \ref{coro_1} and Corollary \ref{T_1null} all $2$-null CRCs with $c_1=1$, $c_2=2$ are obtained as $2$-null CRCs from the Hamming graph $H(6,2)$ and all $1$-null CRCs with $a_{1}=1$ are obtained from CRCs in the Hamming graph $H(3,3)$. The classification of the latter codes is known, see \cite{KKM25} and the only such codes are singleton vertices in $H(6,2)$, $H(3,3)$ and a pair of words at Hamming distance $6$ in $H(6,2)$   with the parameters:
 $$[0,6 | 1,0,5 | 2,0,4| 3,0,3| 4,0,2| 5,0,1|6,0], [0,6 | 1,1,4 | 2,2,2| 3,3],$$ $$[0,6 | 1,0,5| 2,0,4|6,0].$$

The vast majority of $3\times 2$ matrices are excluded because $LP_{\geq 2}$ is infeasible.

A. $LP_{=2}$ is feasible for $3\times 2$ matrix. It turns out that all corresponding $3\times 3$ extended matrices are indeed the parameter matrices for CRCs. We split between several subcases.

A1. $LP_{=2}$ is feasible, $LP_{>2}$ is infeasible. The corresponding extended  matrices are: 
$$[0,6 | 1,0,5 | 6,0], [0,6 | 1,3,2 | 6,0],  
[0,6|1,4,1|6,0] 
[0,6|2,0,4|6,0], [0,6|3,0,3|6,0].$$ 

A2. $LP_{=2}$ is feasible, $LP_{>2}$ is feasible but the matrix has no feasible  descendants w.r.t $LP_{\geq 3}$. The respective extended matrices from this case are $$[0,6 | 1,2,3 | 2,4], [0,6 | 1,2,3 | 3,3].$$
Both are parameter matrices of CRCs from triangular grid (though other constructions can exist). 

A3. $LP_{=2}$ is feasible, $LP_{>2}$ is feasible and has feasible  descendants w.r.t $LP_{\geq 3}$.  There is only one such matrix, which is $\left(%
\begin{array}{cc} 0& 6   \\
3& 0  \\
0& 3  \\\end{array}%
\right)$. The corresponding extended matrix is  $$[0,6|3,0,3|3,3],$$ which is  parameter matrix of a CRC.

For all $4\times 3$ descendants of the matrix $\left(%
\begin{array}{cc} 0& 6   \\
3& 0  \\
0& 3  \\\end{array}%
\right)$, $LP_{\geq 3}$ is feasible only for the following matrices: $\left(%
\begin{array}{cccc} 0& 6 & 0  \\
3& 0 & 3  \\
0& 3 & 0 \\
0& 0 & 3 \\
\end{array}%
\right), \left(%
\begin{array}{cccc} 0& 6 & 0  \\
3& 0 & 3  \\
0& 3 & 0 \\
0& 0 & 6 \\
\end{array}%
\right).$ Since both matrices have zeros on the main diagonal, the corresponding extended matrices fulfill the conditions of Theorem \ref{T_AV_2} and we have a description of such codes. These codes exist for all $\rho \geq 3$.

B. $LP_{\geq 3}$ is feasible, but $LP_{=2}$ is not. In this case, all $4 \times 3$ descendants that are feasible with respect to $LP_{\geq 3}$ cannot have descendants due to the rightmost bottom element being six. Indeed, since in this case $c_3 = 6$, by Theorem \ref{th_1}, $c_4$ is also $6$, which implies that $C_3$ and $C_4$ are disjoint, leading to a contradiction. Moreover, the corresponding extended matrices are the parameter matrices of existing CRCs. We have three such matrices: $\left(%
\begin{array}{cc} 0& 6   \\
1& 0  \\
0& 5  \\\end{array}%
\right)$, $\left(%
\begin{array}{cc} 0& 6   \\
2& 0  \\
0& 4  \\\end{array}%
\right)$, $\left(%
\begin{array}{cc} 0& 6   \\
4& 0  \\
0& 5  \\\end{array}%
\right),$ feasible descendants are
 $\left(%
\begin{array}{ccc} 0& 6&0   \\
1& 0&5  \\
0& 5 &0 \\
0 &0 &6 \\
\end{array}%
\right)$, $\left(%
\begin{array}{ccc} 0& 6&0   \\
2& 0&4  \\
0& 4 &0 \\
0 &0 &6 \\
\end{array}%
\right)$, $\left(%
\begin{array}{ccc} 0& 6&0   \\
4& 0&2  \\
0& 5 &0 \\
0 &0 &6 \\
\end{array}%
\right)$. The corresponding extended matrices in the compact form are  $$[0,6 | 1,0,5 | 5,0,1 |6,0], [0,6 | 2,0,4 | 4,0,2 |6,0], [0,6 | 4,0,2 | 5,0,1 |6,0].$$ All these matrices are the parameter matrices of some CRCs but  the last  matrix is not new as it the parameter matrix of the opposite code obtained by the above mentioned Construction \ref{Constr_H2n2distanceanti}.

\end{proof}

\begin{theorem}

1. The only $2$-null CRCs in $G_4$ that exist with $c_1=1$, $c_2=2$ and any $\rho \geq 2$ are obtained by Constructions \ref{Constr_H2n2distance} and \ref{Constr_H2n2distanceanti}.

2. There are no $2$-null CRCs in $G_4$ for $c_1=1$, $c_2=3$, $4$, $5$, $6$ and any $\rho \geq 2$.

3. The only $2$-null CRCs in $G_4$ that exist with $c_1=1$, $c_2=7$, $c_2=8$ and any $\rho \geq 2$ are obtained from Construction \ref{Constr_halved_pc} and Construction \ref{Constr_diam} with $t=1$.
\end{theorem}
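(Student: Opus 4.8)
The plan is to reduce all three parts to the $0$-$1$ linear programming machinery of this section, specialized to $G_4$ (valency $2n=8$). For a $2$-null CRC with $c_1=1$ the upper-left $3\times 2$ corner of the parameter matrix is forced to be $B_{c_2}=\left(\begin{array}{cc} 0 & 8 \\ 1 & 0 \\ 0 & c_2 \end{array}\right)$, so I would let $\Gamma'$ be the ball subgraph induced by the words of weight at most six and, for each $c_2$, run $LP_{\geq 2}$ (and, where needed, $LP_{=2}$, $LP_{>2}$ and their $\rho=3$ analogues on admissible descendants). By the Proposition of this section an infeasible $LP_{\geq 2}$ yields a nonexistence statement valid for all $\rho\geq 2$ at once. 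For Part 2 I would simply record that $LP_{\geq 2}$ with $B=B_{c_2}$ is infeasible for each of $c_2=3,4,5,6$, which excludes every such CRC of arbitrary covering radius.

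For Part 3 the instances $c_2=7$ and $c_2=8$ are feasible and are resolved by descent, keeping only descendants consistent with the monotonicity of Theorem \ref{T_AV_1}. When $c_2=8$ the equality already forces $a_2=b_2=0$, hence $\rho=2$, and $LP_{=2}$ certifies the extension $[0,8 | 1,0,7 | 8,0]$, the diameter perfect matrix of Construction \ref{Constr_diam} with $t=1$. When $c_2=7$ I would find $LP_{=2}$ infeasible, ruling out the $\rho=2$ matrix $[0,8 | 1,0,7 | 7,1]$, while $LP_{>2}$ is feasible; among the $4\times 3$ descendants the only one surviving $LP_{\geq 3}$ is $\left(\begin{array}{ccc}0&8&0\\1&0&7\\0&7&0\\0&0&8\end{array}\right)$, whose entry $c_3=8$ equals the valency and so forces $\rho=3$ and admits no further descendant, leaving the halved perfect code matrix $[0,8 | 1,0,7 | 7,0,1 | 8,0]$ of Construction \ref{Constr_halved_pc} as the unique surviving possibility.

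For Part 1 the codes may have covering radius as large as $8$, so the radius-six ball is too small for a direct LP classification; instead I would invoke Corollary \ref{coro_1}, by which every $2$-null CRC in $G_4$ with $c_1=1$, $c_2=2$ and $\rho\geq 2$ arises by Construction \ref{Constr_H2n2} from a $2$-null CRC in $H(8,2)$ with the same parameters. It then suffices to read off, from the classification of completely regular codes in $H(8,2)$ \cite{KKM25}, that the only such codes are the distance partition of a single vertex and the antipodal pair $\{{\bf 0},{\bf 1}\}$, which pull back to Constructions \ref{Constr_H2n2distance} and \ref{Constr_H2n2distanceanti} respectively.

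The conceptual steps above are short; the main obstacle is computational and lies in certifying the infeasibility and feasibility verdicts of Parts 2 and 3. Since these are finite witnesses on a radius-six ball while the conclusions concern the infinite graph $G_4$, the delicate point is to be sure that this ball is already large enough to force infeasibility for $c_2\in\{3,4,5,6\}$ and uniqueness of the feasible descendants for $c_2\in\{7,8\}$, so that enlarging the search domain could not resurrect any excluded matrix.
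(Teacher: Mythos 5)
Your proposal follows essentially the same route as the paper: Part 1 is handled exactly as the paper does, via Corollary \ref{coro_1} together with the classification of the relevant CRCs in $H(8,2)$ from \cite{KKM25}, and Parts 2 and 3 are handled by the $LP_{\geq 2}$/$LP_{=2}$/descendant machinery on the weight-at-most-six ball, with your $c_2=7,8$ analysis matching what the paper leaves implicit ("analyzed similarly to Theorem \ref{ThG_3}"). The only deviation is the case $c_2=3$, which the paper excludes by the already-proven Theorem \ref{th_1} rather than by an LP run; this is the more robust choice, since infeasibility of the relaxed LP on a finite ball is not guaranteed for a nonexistent code (feasibility never certifies existence), which is precisely the caveat you flag in your closing paragraph.
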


\begin{proof}
By Corollary \ref{coro_1}, the CRCs with $c_1=1$ and $c_2=2$ correspond to CRCs in $H(8,2)$, which are characterized in \cite{KKM25}. The respective CRCs in $G_4$ are obtained by Constructions \ref{Constr_H2n2distance} and \ref{Constr_H2n2distanceanti}. The CRCs with $c_2=3$ were excluded by Theorem \ref{th_1}. The remaining cases $c_2=4$, $5$, $6$ and $7$ were analyzed similarly to Theorem \ref{ThG_3}, considering problems $LP_{\geq}$, $LP_{=}$ and descendants if necessary. The only non-excluded parameter matrix was the one from Construction \ref{Constr_halved_pc}. The case $c_2=8$ corresponds to the matrix from Construction \ref{Constr_diam} with $t=1$.
\end{proof}


\bigskip

\end{document}